\newtheorem*{hypothesis}{Hypothesis (H)}
\newcommand{\HypH}{\hyperref[hyp:H]{(H)} }
\let \dis \displaystyle
\let \bb \mathbb
\let \cal \mathcal
\let \vp \varphi
\let\frak\mathfrak
\newcommand{\R}{\mathbb{R}} 
\title{ Averaged Controllability of the Random Schrödinger Equation with Diffusivity Following Absolutely Continuous Distributions}
\author{Jon Asier Bárcena-Petisco\thanks{Department of Mathematics, University of the Basque Country UPV/EHU, Barrio Sarriena s/n, 48940, Leioa, Spain, \url{https://orcid.org/0000-0002-6583-866X}. E-mail: {\tt jonasier.barcena@ehu.eus}.} \and Fouad Et-Tahri\thanks{Lab-SIV, Faculty of Sciences-Agadir, Ibnou
Zohr University, B.P. 8106, Agadir,
Morocco, \url{https://orcid.org/0009-0006-8926-5815}. Email: {\tt fouad.et-tahri@edu.uiz.ac.ma}.}}%
\begin{document}
\maketitle
\numberwithin{equation}{section}
\newtheorem{theorem}{Theorem}
\numberwithin{theorem}{section}
\newtheorem{proposition}[theorem]{Proposition}
\newtheorem{conjecture}{Conjecture}
\newtheorem{fact}[theorem]{Fact}
\newtheorem{lemma}[theorem]{Lemma}
\newtheorem{step}{Step}
\newtheorem{corollary}[theorem]{Corollary}
\theoremstyle{remark}
\newtheorem{remark}[theorem]{Remark}
\newtheorem{definition}[theorem]{Definition}
\newtheorem{example}[theorem]{Example}

\noindent
\textbf{Abstract:} 
This paper is devoted to the averaged controllability of the random Schrödinger equation, with diffusivity as a random variable drawn from a general probability distribution. First, we show that the solutions to these random Schrödinger equations are null averaged controllable with an open-loop control independent of randomness from any arbitrary subset of the domain with strictly positive measure and in any time. This is done for an interesting class of random variables, including certain stable distributions, specifically recovering the known result when the random diffusivity follows a normal or Cauchy distribution. Second, by the Riemann-Lebesgue lemma, we prove for any time the lack of averaged exact controllability in a $L^2$ setting for all absolutely continuous random variables. Notably, this implies that this property is not inherited from the exact controllability of the Schrödinger equation. Third, we show that simultaneous null controllability is not possible except for a finite number of scenarios. 
Finally, we perform numerical simulations that robustly validate the theoretical results.

\noindent
\textbf{Key words:} 
Schrödinger equation, 
Averaged controllability, Averaged observability, numerical algorithms.

\noindent
\textbf{Abbreviated title:} Averaged Controllability of Schrödinger Equation

\noindent
\textbf{AMS subject classification:} 
35J10, 35R60, 93B05, 93C20

\noindent
\textbf{Acknowledgements:} J.A.B.P was supported by the Grant PID2021-126813NB-I00 
	funded by \\ MCIN/AEI/10.13039/501100011033 and by ``ERDF A way of making 
	Europe" and by the grant\\IT1615-22 funded the Basque Government.

\noindent
\allowdisplaybreaks
\newpage

\section{Introduction}

\paragraph{}

In this paper, we study the averaged controllability properties of the Schrödinger equation. The dynamics of the system is given by:

\begin{equation}\label{sys:Schr}  (\mathcal{P}_{\alpha}) \quad
\begin{cases}
\partial_t y -\alpha \mathrm{i}\Delta y=\mathds{1}_{G_0} u,& \mbox{ on }Q_T,\\
y=0, & \mbox{ on }\Sigma_T,\\
y(0,\cdot)=y_0, & \mbox{ in } G.
\end{cases}
\end{equation}
Here $T>0$ is a time horizon, $G \subset \R ^{d}$ ($d\geq 1$) is a Lipschitz domain with boundary $\partial G$, $Q_T \overset{\text{def}}{=} (0,T)\times G$, $\Sigma_T \overset{\text{def}}{=} (0,T)\times \partial G$, $G_0\subset G$ is a subset of strictly positive measure, $\mathds{1}_{G_0}$ its indicator function, $\mathrm{i}$ stands for the imaginary unit satisfying $\mathrm{i}^2=-1$, and 
$\alpha=\alpha(\omega)$ is an unknown real random variable on a probability space $(\Omega,\mathcal{F},\mathbb{P})$
which induces a probability measure $\mu_{\alpha}$ in $\mathbb{R}$, the input (control) $u=u(t,x)\in L^{2}((0,T)\times G_0)$ and the initial state $y_0\in L^{2}(G)$ are independent of the parameter $\omega$. It is well known that the operator $\mathrm{i}\Delta$, with Dirichlet boundary conditions, generates a unitary group $(e^{\mathrm{i}t\Delta})_{t\in\mathbb{R}}$ on $L^{2}(G)$, and 
for $\mathbb{P}$-almost every $\omega\in\Omega$ and all
choices of $u$ and $y_0$, the solution $y=y(t,x;\alpha(\omega); y_0; u)$ of $(\mathcal{P}_{\alpha(\omega)})$ is given by the Duhamel formula:
\[y(t,\cdot;\alpha(\omega); y_0; u)=e^{\mathrm{i}t\alpha(\omega)\Delta}y_0+ \int_{0}^{t}e^{\mathrm{i}(t-s)\alpha(\omega)\Delta}\mathds{1}_{G_0}u(s,\cdot)ds \in\mathcal{C}([0,T];L^{2}(G)).\]
The ideal situation would be to have
a simultaneous control for $\mathbb{P}$-almost every $\omega\in\Omega$. As we shall see, this is not possible with any absolutely continuous random variable (see Theorem \ref{thm:nonsimcon}). 
However, we can expect to control at least the average (or expectation) of $L^{2}(G)$-valued Borel random variable $\omega\mapsto y(T,\cdot;\alpha(\omega); y_0; u)\in L^{\infty}(\Omega;L^{2}(G))$. The average of such system exists, due to the boundedness of the state with respect to the randomness, and will be denoted by $\mathbb{E}(y(t, \cdot;\alpha; y_0; u))$; that is,
\begin{eqnarray*}
    \mathbb{E}(y(t,\cdot;\alpha; y_0; u))& \overset{\text{def}}{=} &\int_{\Omega}y(t,\cdot;\alpha(\omega); y_0; u)d\mathbb{P}(\omega) \in\mathcal{C}([0,T];L^{2}(G)).
\end{eqnarray*}
By the expectation transformation formula (see  \eqref{expectation transformation formula} below), we obtain:
\begin{eqnarray*}
    \mathbb{E}(y(t,\cdot;\alpha; y_0; u))=\int_{-\infty}^{\infty}y(t,\cdot;\xi; y_0; u) d\mu_{\alpha}(\xi).
\end{eqnarray*}
where, for all $\xi\in\mathbb{R}$, $y(t,\cdot;\xi; y_0; u)$ is the solution of $(\mathcal{P}_{\xi})$. 
\par 
In probability theory, the distribution of a random variable $\alpha$ is characterized by its characteristic function  $\varphi_{\alpha}$, given by:
\begin{equation}\label{def:charfunc}
	\vp_\alpha(s) \overset{\text{def}}{=} \mathbb{E}(e^{\mathrm{i}s\alpha})=\int_{-\infty}^{\infty}e^{\mathrm{i}s\xi}d\mu_{\alpha}(\xi).
\end{equation}
\paragraph{}
Our first contribution (see Theorem \ref{Main result}) is to establish the null averaged controllability of \eqref{sys:Schr} for an interesting class of random variables, described by the following hypothesis:
\begin{hypothesis}\label{hyp:H}
	There are $c>0$, $r>\frac{1}{2}$, $\theta>0$, and $T_0\in (0, T]$ 
    such that for all $\lambda\geq0$ 
    and $t_1,t_2\in [0,T_0]$ satisfying $t_1<t_2$ we have that: 
	\begin{equation}
		|\vp_\alpha(\lambda t_2)|\leq e^{-c\lambda^r(t_2-t_1)^{\theta}}|\vp_\alpha(\lambda t_1)|, \label{eq:hypSch}
	\end{equation}
    where $\vp_\alpha$ is the characteristic function of $\alpha$.
\end{hypothesis}\noindent
Many examples satisfying \HypH are given in Section \ref{Sec:HYP}, including normal and Cauchy random variables. This result is obtained with the Hilbert Uniqueness Method (see \cite{lions1988controlabilite,russell1978controllability}) and notably by obtaining an observability inequality with a spectral approach. In particular, we use the approach from the seminal work \cite{miller2010observability}, an approach that has been used in many relevant papers, such as \cite{apraiz2014observability,  beauchard2018null, burq2022propagation, lissy2019internal, lu2016averaged}. In the context of average controllability, it was also used for the heat equation in \cite{barcena2021averaged}.

Moreover, a second contribution is that, as we show in Theorem \ref{tm:noexcon}, the averaged exact controllability does not hold in the $L^2$ set up, which is a surprising result because one would expect that such property would be inherited from the exact controllability properties of the Schrödinger equation.

 Finally, a third contribution, as we show in Theorem \ref{thm:nonsimcon}, is that the preimage of $0$ under $\xi\mapsto y(T,\cdot;\xi;y_0;u)$ is finite for all $y_0\in L^2(G)\setminus\{0\}$ and $u\in L^2((0,T)\times G_0)$. In particular, the event: ``the system \eqref{sys:Schr} is simultaneous null controllable" is negligible for absolutely continuous random variables, that is, $$\mathbb{P}\left[\omega\;:\; y(T,\cdot;\alpha(\omega);y_0;u)=0\right]=0.$$

\noindent \textbf{A brief overview of the related literature.}  Regarding the control of averaged properties,
the problem and its abstract formulation were first presented in the work \cite{zuazua2014averaged}, where finite-dimensional systems were considered. The pioneering study on averaged controllability in the context of partial differential equations (PDEs) was introduced in \cite{lu2016averaged}. In this study, the authors addressed the problem of controlling the average of PDEs and examined the properties of the transport, heat, and Schrödinger equations in specific scenarios. For the Schrödinger equation, they analyzed the equation when the diffusivity follows uniform, exponential, normal, Laplace, chi-square, and Cauchy probability laws. 
In \cite{zuazua2016stable} the continuous average of the heat equation was considered. Further contributions, such as \cite{loheac2017averaged} and \cite{lazar2018stability} investigated perturbations of probability density functions modeled by Dirac masses. In \cite{coulson2019average} and \cite{barcena2021averaged} the controllability of the heat equation with random diffusivity was studied. In the second paper, it was discovered that some probability density functions caused fractional dynamics for the heat equation.
Moreover, we would like to point out that there are many known results for lower-order random terms, as shown in the survey \cite{lu2022concise} and the book \cite{lu2021mathematical}, and some of them are recent results for nonlinear parabolic stochastic equations such as in \cite{hernandez2022statistical} and \cite{hernandez2023global}.

Regarding the controllability of the Schrödinger equation, there are many classical papers such as \cite{anantharaman2012dispersion,  burq2004geometric, lebeau1992controle, machtyngier1994exact,
phung2001observability}, 
involving geometric control conditions. As for simultaneous control, we would like to highlight \cite{morancey2015simultaneous}, where there is a bilinear control, and \cite{lopez2016null}, where cascade-like systems are studied. Also, for stochastic equations with random lower-order terms, we may highlight, for example, \cite{lu2013observability}. Finally, as mentioned above, for random diffusivity, it was studied in \cite{lu2016averaged} for specific probability laws. 

Our contribution  to the literature is to obtain the averaged controllability of the Schrödinger equation with random diffusivity in a more general setting and to determine how much we can generalize the known results. 

\noindent \textbf{Structure of this paper.} In Section \ref{Sec2}, we introduce several key concepts along with their characterization, and analyze the main assumption of the paper. Section \ref{Sec3} is devoted to proving the main result on the averaged null controllability of the system \eqref{sys:Schr}. In Section \ref{Sec4}, we prove the lack of exact average for general absolutely continuous random variables and of simultaneous controllability. In Section \ref{Sec5}, we present a numerical method to validate our theoretical result in the case of Cauchy and normal distributions. Section \ref{sec:prob} is dedicated to the presentation of some analogue and some open problems. Finally, Section \ref{Sec6} is dedicated to the conclusion and outlines several potential directions for future research.

\noindent \textbf{Notation:} 
Throughout this paper, we adopt some standard notations and usual abbreviations in probability:
\begin{itemize}
     \item $\mathbb{N}=\{0,1,2, \cdots\}$ stands for the set of natural numbers that includes zero.
     \item For every measurable set $A\subset \mathbb{R}^d$ ($d\ge 1$), we denote by $|A|$ its Lebesgue measure. For every $x\in\mathbb{R}^d$ and $r>0$, $B(x,r)$ denotes the open ball centered at $x$ with radius $r$.
    \item $\mathcal{H}$ stands for the space $L^2(G) \overset{\text{def}}{=} L^{2}(G;\mathbb{C})$ equipped with its usual Hermitian scalar product $\left\langle\cdot,\cdot\right\rangle$, and $\|\cdot\|$ stands for its Euclidean norm.
    \item $(\Omega,\mathcal{F},\mathbb{P})$ denotes a probability space.
    \item A $\mathcal{H}$-valued Borel random variable is a function from $\Omega$ to $\mathcal{H}$ that is $(\mathcal{F},\mathcal{B}(\mathcal{H}))$-measurable, where $\mathcal{B}(\mathcal{H})$ denotes the Borel $\sigma$-algebra on $\mathcal{H}$. A real Borel random variable is simply called a real random variable.
    \item $\alpha$ denotes a real random variable.
    \item The probability distribution of $\alpha$, defined by $\mu_{\alpha} \overset{\text{def}}{=} \mathbb{P}\circ \alpha^{-1}$, is called its distribution.
    \item \textbf{PDF} denotes the probability density function of an absolutely continuous random variable. We recall that the distribution of $\alpha$
    is absolutely continuous (with respect to the Lebesgue measure) if there is $\rho_{\alpha}\in L^1(\mathbb R)$ nonnegative, called the probability density function, such that for all Borel measurable set $A\subset\mathbb R$, $$\mathbb \mu_{\alpha}[A] \overset{\text{def}}{=} \mathbb{P}[\alpha\in A]=\int_A \rho_{\alpha}(x)dx.$$
    We simply say that $\alpha$ is absolutely continuous if its distribution is absolutely continuous.
    \item Recall that, for any $F:\mathbb{R}\mapsto \mathcal{H}$ continuous function, $F\circ\alpha$ is $\mathcal{H}$-valued Borel random variable and the expectation of $F\circ\alpha$ (provided that it exists) is given by the expectation transformation formula:
    \begin{eqnarray}
        \mathbb{E}(F\circ\alpha)\overset{\text{def}}{=}\int_{\Omega}F(\alpha(\omega))d\mathbb{P}(\omega) =\int_{-\infty}^{\infty}F(\xi)d\mu_{\alpha}(\xi). \label{expectation transformation formula}
    \end{eqnarray}
    \item \textbf{CF} denotes the characteristic function of a random variable. It is denoted by $\varphi_{\alpha}$ and is defined by:
    \[\varphi_{\alpha}(s)\overset{\text{def}}{=}\mathbb{E}(e^{\mathrm{i}s\alpha})=\int_{-\infty}^{\infty}e^{\mathrm{i}s\xi}d\mu_{\alpha}(\xi).\]
    \item We denote by $\{\lambda_n\}_{n\in\mathbb{N}}$ the eigenvalues of $-\Delta$ under Dirichlet conditions and $\{e_n\}_{n\in\mathbb{N}}$ the corresponding sequence of orthonormal eigenfunctions in $L^2(G)$. Note that $\lambda_0>0$, and the sequence $\{\lambda_n\}_{n\in\mathbb{N}}$ is increasing and tends to infinity.
\end{itemize}

\section{Preliminary concepts and main hypothesis} \label{Sec2}

\subsection{Preliminary concepts}
We now introduce the following notions of averaged controllability.
\begin{definition}\label{def-ex-con}
System \eqref{sys:Schr} is said to fulfill the
property of {\it exact averaged controllability}
or to be  {\it exactly controllable in average} in 
the space $\mathcal{H}$ with control
cost $C_{ex}=C_{ex}(G, G_0, \alpha, T)>0$ if given any  $y_0, y_1\in \mathcal{H}$, there
exists a control $u\in L^2((0,T)\times G_{0})$ such that:
\begin{equation}\label{def-ex-con-eq1}
\|u\|_{L^2((0,T)\times G_0)}\leq C_{ex}(\|y_0\| + \|y_1\|)
\end{equation}
and the average of solutions to
\eqref{sys:Schr} satisfies:
\begin{equation*}
\mathbb{E}(y(T,\cdot;\alpha; y_0; u))=y_1.
\end{equation*}
\end{definition}

\begin{definition} 
System \eqref{sys:Schr} is said to fulfill the
property of {\it null averaged controllability}
or to be  {\it null controllable in average} in 
the space $\mathcal{H}$ with control
cost $C_{null}=C_{null}(G,G_0, \alpha, T)>0$ if given any  $y_0\in \mathcal{H}$, there
exists a control $u\in L^2((0,T)\times G_0)$ such that
\begin{equation}\label{est:connull}
\|u\|_{L^2((0,T)\times G_0)}\leq C_{null}\|y_0\|
\end{equation}
and the average of solutions to
\eqref{sys:Schr} satisfies:
\begin{equation}
\mathbb{E}(y(T,\cdot;\alpha; y_0; u))=0. \label{nullaver}
\end{equation}
\end{definition}

As usual, these notions admit a dual notion regarding observability. For that, note that the adjoint system is given by:

\begin{equation}\label{sys:adjSchr}
\begin{cases}
-\partial_t z +\alpha \mathrm{i} \Delta z=0,& \mbox{ on }Q_T,\\
z=0, & \mbox{ on }\Sigma_T,\\
z(T,\cdot)=z_T, & \mbox{ in } G.
\end{cases}
\end{equation}

\begin{definition} 
	System \eqref{sys:adjSchr} is said to fulfill the
	property of {\it exact averaged observability}
	or to be  {\it exactly observable in average} in 
	the space $\mathcal{H}$ with observability
	cost $C_{exob}=C_{exob}(G,G_0, \alpha, T)>0$ if for any  $z_T\in \mathcal{H}$:
	\begin{equation}\label{est:exact_obs}
		\left\|z_T\right\|\leq C_{exob}\left\|\mathbb{E}( z(\cdot,\cdot;\alpha;z_T))\right\|_{L^2((0,T)\times G_0)}.
	\end{equation}
\end{definition}

\begin{definition} 
System \eqref{sys:adjSchr} is said to fulfill the
property of {\it null averaged observability}
or to be  {\it null observable in average} in 
the space $\mathcal{H}$ with observability
cost $C_{ob}=C_{ob}(G,G_0, \alpha, T)>0$ if for any  $z_T\in \mathcal{H}$:
\begin{equation}\label{est:obs}
\left\|\mathbb{E}(z(0,\cdot;\alpha;z_T))\right\|\leq C_{ob}\left\|\mathbb{E}( z(\cdot,\cdot;\alpha;z_T))\right\|_{L^2((0,T)\times G_0)}.
\end{equation}
\end{definition}

\begin{proposition} \label{Pr: obser_null cont}
Let $T>0$.
System \eqref{sys:Schr} is null controllable in average if and only if the system \eqref{sys:adjSchr} is null observable in average. Furthermore, the optimal controllability constant and the optimal observability constant are related by:
 \begin{eqnarray}
 	C_{null}=C_{ob}. \label{optimal}
 \end{eqnarray}
\end{proposition}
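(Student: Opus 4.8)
The plan is to establish the equivalence via the Hilbert Uniqueness Method, by first deriving the fundamental duality identity pairing the averaged controlled state with the averaged adjoint state, and then reformulating null averaged controllability as a range-inclusion problem that a classical functional-analytic lemma converts into the observability estimate, keeping track of the optimal constants. First I would fix $z_T\in\mathcal H$ and compute, for $\mathbb P$-almost every $\omega$, the pairing $\langle y(T,\cdot;\alpha(\omega);y_0;u),z_T\rangle$ from the Duhamel formula. Since $(e^{\mathrm it\alpha(\omega)\Delta})_{t\in\mathbb R}$ is unitary, its adjoint is $e^{-\mathrm it\alpha(\omega)\Delta}$, and one recognizes $e^{-\mathrm i(T-s)\alpha(\omega)\Delta}z_T=z(s,\cdot;\alpha(\omega);z_T)$ and $e^{-\mathrm iT\alpha(\omega)\Delta}z_T=z(0,\cdot;\alpha(\omega);z_T)$ as the solution of the adjoint system \eqref{sys:adjSchr}. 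This gives, for each $\omega$,
\[
\langle y(T,\cdot;\alpha(\omega);y_0;u),z_T\rangle=\langle y_0,z(0,\cdot;\alpha(\omega);z_T)\rangle+\int_0^T\langle \mathds 1_{G_0}u(s,\cdot),z(s,\cdot;\alpha(\omega);z_T)\rangle\,ds.
\]
Taking expectations, and using that $y_0$ and $u$ are independent of $\omega$ together with Fubini's theorem (justified by the uniform bound $\|z(s,\cdot;\alpha(\omega);z_T)\|=\|z_T\|$ from unitarity), I obtain the averaged duality identity
\[
\langle \mathbb E(y(T,\cdot;\alpha;y_0;u)),z_T\rangle=\langle y_0,\mathbb E(z(0,\cdot;\alpha;z_T))\rangle+\int_0^T\langle \mathds 1_{G_0}u(s,\cdot),\mathbb E(z(s,\cdot;\alpha;z_T))\rangle\,ds.
\]

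Next I would introduce two bounded operators encoding the averaged adjoint dynamics: $L\colon\mathcal H\to L^2((0,T)\times G_0)$, defined by $Lz_T:=\mathbb E(z(\cdot,\cdot;\alpha;z_T))|_{(0,T)\times G_0}$, and $S\colon\mathcal H\to\mathcal H$, defined by $Sz_T:=\mathbb E(z(0,\cdot;\alpha;z_T))$. Both are bounded with norm at most $1$, since $\|\mathbb E(e^{\mathrm it\alpha\Delta}f)\|\le\mathbb E\|e^{\mathrm it\alpha\Delta}f\|=\|f\|$ by unitarity and the triangle inequality for Bochner integrals. With these notations the duality identity reads $\langle \mathbb E(y(T)),z_T\rangle=\langle S^*y_0+L^*u,z_T\rangle$ for all $z_T$, whence $\mathbb E(y(T,\cdot;\alpha;y_0;u))=S^*y_0+L^*u$. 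Consequently, null averaged controllability with cost $C_{null}$ is exactly the statement that for every $y_0\in\mathcal H$ there exists $u\in L^2((0,T)\times G_0)$ with $\|u\|_{L^2((0,T)\times G_0)}\le C_{null}\|y_0\|$ and $L^*u=-S^*y_0$; that is, the range inclusion $\operatorname{Ran}(S^*)\subseteq\operatorname{Ran}(L^*)$ together with the quantitative norm control.

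I would then invoke the classical range-inclusion lemma: for bounded linear operators $A,B$ into a common Hilbert space, the property ``for every $e$ there is $f$ with $Bf=Ae$ and $\|f\|\le C\|e\|$'' holds if and only if $\|A^*\zeta\|\le C\|B^*\zeta\|$ for all $\zeta$, and the least admissible constants coincide. Applying this with $A=S^*$ and $B=L^*$ (so that $A^*=S$ and $B^*=L$) and testing against $\zeta=z_T$, the dual estimate becomes $\|Sz_T\|\le C\|Lz_T\|$, which unfolds precisely to the null averaged observability inequality \eqref{est:obs}. The equality of optimal constants then yields \eqref{optimal}, namely $C_{null}=C_{ob}$.

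The routine parts are the Duhamel bookkeeping and the Fubini interchange, both controlled by unitarity. The delicate point I expect to be the main obstacle is the \emph{quantitative} half of the abstract lemma: passing from the range inclusion to the adjoint estimate \emph{with the same constant}, which I would settle through the closed-graph/closed-range argument (or, equivalently, a minimal-norm selection giving a factorization $S^*=L^*\circ R$ with $\|R\|\le C$, and a duality computation for $\|R^*\|$) rather than a soft surjectivity statement, since only this careful treatment delivers the sharp identity $C_{null}=C_{ob}$ instead of a mere two-sided equivalence of the two properties.
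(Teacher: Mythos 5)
Your proposal is correct and, at bottom, it rests on the same duality argument as the paper's: your Duhamel computation reproduces the paper's identity \eqref{eq: carcontrol}, and both proofs reduce the controllability statement to the estimate \eqref{est:obs}. The difference is packaging. You recast the problem as the operator identity $\mathbb{E}(y(T,\cdot;\alpha;y_0;u))=S^*y_0+L^*u$ and then invoke the quantitative Douglas-type range-inclusion lemma, whereas the paper carries out by hand exactly the construction that proves the nontrivial direction of that lemma: assuming \eqref{est:obs}, it defines the linear functional $\Phi\bigl(\mathds{1}_{G_0}\overline{\mathbb{E}(z(\cdot,\cdot;\alpha;z_T))}\bigr)=-\langle y_0,\mathbb{E}(z(0,\cdot;\alpha;z_T))\rangle$ on the subspace $\mathcal F$ (the range of your $L$, up to conjugation), extends it by density and by zero on the orthogonal complement, and applies Riesz representation in $L^2((0,T)\times G_0)$ to produce the control; your minimal-norm selection handles the converse direction, which the paper labels standard. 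What your abstraction buys is modularity and a transparent route to $C_{null}=C_{ob}$ in \eqref{optimal}; what the paper's explicit treatment buys is that it isolates the one genuine subtlety, recorded in Remark \ref{rk:diffproof}: the classical HUM variational scheme on $\mathcal H$ (Riesz representation for the Gramian bilinear form) would require the implication $Lz_T=0\Rightarrow z_T=0$, which is far from obvious here; both your formulation and the paper's sidestep this because working in $L^2((0,T)\times G_0)$ requires no injectivity or definiteness. Two cosmetic points: $\|L\|\le\sqrt T$ rather than $\le 1$ (only boundedness is used, so nothing breaks), and when invoking the range-inclusion lemma you should make explicit that the minimal-norm solutions $u_{y_0}$ depend linearly and boundedly on $y_0$ (they are the unique solutions lying in $(\ker L^*)^\perp$), since that linear bounded selection is precisely what upgrades the equivalence of the two properties to the sharp equality of optimal constants.
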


\begin{remark}
The result \eqref{optimal} 
is a side result obtained within the proof, which, even if we do not use in the paper, may have some applications in Optimal Control.  
\end{remark}

Proposition \ref{Pr: obser_null cont} was stated in an abstract setting in \cite[Theorem A.2]{lu2016averaged} without proof. However, it might be proved with some adaptation of the Hilbert Uniqueness Method. This method dates back to  \cite{lions1988controlabilite,russell1978controllability}, 
and, in particular, for the averaged controllability problem, it dates back to \cite[Theorem 3]{zuazua2014averaged}
and \cite[Theorem A.1]{lu2016averaged}. Since some modifications  from the literature are needed (see Remark \ref{rk:diffproof}),  we sketch the proof pointing out the main novelties:

\begin{proof}[Sketch of the proof]
Performing classical integration by parts, it can be easily proved that, for a fixed $u\in L^2((0,T)\times G_0)$: 
	\begin{equation}
			\left\langle \mathbb{E}(y(T,\cdot;\alpha; y_0; u)),z_T\right\rangle-\left\langle y_0, \mathbb{E}(z(0,\cdot;\alpha;z_T))\right\rangle
			=\int_0^T\int_{G_0} u(t,x)\overline{\mathbb{E}(z(t,x;\alpha;z_T))}dxdt. \label{eq: carcontrol}
	\end{equation}
    \paragraph{}
Let us suppose that \eqref{sys:Schr} is null controllable in average. It is standard to show that $C_{ob}\leq C_{null}$. 
\paragraph{}
	Conversely, let us suppose that the observability estimate \eqref{est:obs} is satisfied.
    We consider the linear subspace $\mathcal{F}$ of $L^{2}((0,T)\times G_0)$:
	\begin{eqnarray*}
		\mathcal{F}\overset{\text{def}}{=}\{ \mathds{1}_{G_0}\overline{\mathbb{E}(z(\cdot,\cdot;\alpha;z_{T}))},\quad z_{T}\in \mathcal{H}\},
	\end{eqnarray*}
	and the linear functional $\Phi$ whose domain is $\mathcal{F}$ and defined by:
	\begin{eqnarray*}
		\Phi(\mathds{1}_{G_0}\overline{\mathbb{E}(z(\cdot,\cdot;\alpha;z_{T}))})\overset{\text{def}}{=}-\left\langle y_{0}, \mathbb{E}(z(0,\cdot;\alpha;z_{T})) \right\rangle.
	\end{eqnarray*}
	Using the observability inequality \eqref{est:obs}, we obtain that $\Phi$ is well defined and a bounded linear functional on $\mathcal{F}$ with norm: $$\|\Phi\|_{\mathcal{F}^{\prime}}\leq C_{ob}\|y_{0}\|.$$
	Then, by extending by density to the closure of $\mathcal{F}$ and setting 
	$0$ on the orthogonal set (in $L^{2}((0,T)\times G))$) of the closure of $\mathcal{F}$, we can extend $\Phi$ to a bounded linear functional $\widetilde{\Phi}$ on $L^{2}((0,T)\times G_0)$ with the same norm. It follows from the Riesz representation Theorem that there is a $u\in L^{2}((0,T)\times G_0)$ such that for any $v\in L^{2}((0,T)\times G_0)$,
	\begin{eqnarray*}
		\widetilde{\Phi}(v)=\left\langle u,v \right\rangle_{L^{2}((0,T)\times G_0)}.
	\end{eqnarray*}
	In particular, it is standard that $u$ takes the average of the solutions takes $0$ at time $T$ and we have that:
\begin{equation*}
	\|u\|_{L^2((0,T)\times G_0)}=\|\widetilde{\Phi}\|_{(L^2((0,T)\times G_0))^{\prime}}=\|\Phi\|_{\mathcal{F}^{\prime}}\leq C_{ob}\|y_0\|,
\end{equation*}
showing that it can be controlled with a cost 
$C_{null}\leq C_{ob}$.\\
\end{proof}

\begin{remark}\label{rk:diffproof}
Note that, as a novelty, the Riesz Representation Theorem is used in $L^2((0,T)\times G)$ instead of in $L^2(G)$ with the 
Hermitian product given by:
$$(z_T,\tilde z_T)\mapsto \int_0^T\int_{G_0}\mathbb E(z(t,x;\alpha;z_T))\overline{\mathbb E(z(t,x;\alpha; \tilde z_T))}dxdt.$$
Indeed, it is far from obvious that $\displaystyle\int_0^T\int_{G_0} |\mathbb E(z(t,x;\alpha;z_T))|^{2}dxdt=0$ implies that
$z_T=0$. 
\end{remark}

With regard to exact controllability, we have the following result, proved in \cite[Theorem A.1]{lu2016averaged}:
\begin{proposition}\label{prop:exavcont}
Let $T>0$. System \eqref{sys:Schr} is exactly controllable in average if and only if  system \eqref{sys:adjSchr} is exactly observable in average. Furthermore, the optimal controllability constant and the optimal observability constant are related by:
 \begin{eqnarray*}
 	C_{ex}=C_{exob}. 
 \end{eqnarray*}
\end{proposition}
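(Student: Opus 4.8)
The plan is to adapt the Hilbert Uniqueness Method exactly as in the proof of Proposition \ref{Pr: obser_null cont}, reusing the same duality identity \eqref{eq: carcontrol}, but now exploiting that exact observability furnishes a \emph{coercive} sesquilinear form, so that one may invoke the Lax--Milgram lemma directly on $\mathcal{H}$ rather than extend a functional from a subspace. Establishing both implications while tracking the constants carefully will give the sharp relation $C_{ex}=C_{exob}$.

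First I would prove the easy implication, that exact controllability implies exact observability. Fix $z_T\in\mathcal{H}\setminus\{0\}$ and apply exact controllability with $y_0=0$ and target $y_1=z_T$: there is $u$ with $\|u\|_{L^2((0,T)\times G_0)}\le C_{ex}\|z_T\|$ and $\mathbb{E}(y(T,\cdot;\alpha;0;u))=z_T$. Substituting $y_0=0$ into \eqref{eq: carcontrol} gives
\[
\|z_T\|^2=\langle \mathbb{E}(y(T,\cdot;\alpha;0;u)),z_T\rangle=\int_0^T\int_{G_0}u\,\overline{\mathbb{E}(z(t,x;\alpha;z_T))}\,dx\,dt .
\]
By Cauchy--Schwarz and the cost bound, the right-hand side is at most $C_{ex}\|z_T\|\,\|\mathbb{E}(z(\cdot,\cdot;\alpha;z_T))\|_{L^2((0,T)\times G_0)}$; dividing by $\|z_T\|$ yields \eqref{est:exact_obs} with $C_{exob}\le C_{ex}$.

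For the converse, assume \eqref{est:exact_obs} and introduce the sesquilinear form
\[
\Lambda(z_T,\tilde z_T)\overset{\text{def}}{=}\int_0^T\int_{G_0}\mathbb{E}(z(t,x;\alpha;z_T))\,\overline{\mathbb{E}(z(t,x;\alpha;\tilde z_T))}\,dx\,dt
\]
on $\mathcal{H}\times\mathcal{H}$. Its boundedness follows because unitarity of $(e^{\mathrm{i}t\xi\Delta})$ together with Jensen's inequality makes $z_T\mapsto\mathbb{E}(z(\cdot,\cdot;\alpha;z_T))$ bounded from $\mathcal{H}$ into $L^2((0,T)\times G_0)$, while coercivity, $\Lambda(z_T,z_T)\ge C_{exob}^{-2}\|z_T\|^2$, is exactly the squared observability inequality. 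The functional $L(\tilde z_T)\overset{\text{def}}{=}\langle y_1,\tilde z_T\rangle-\langle y_0,\mathbb{E}(z(0,\cdot;\alpha;\tilde z_T))\rangle$ is bounded on $\mathcal{H}$: by the same unitarity and Jensen argument $\|\mathbb{E}(z(0,\cdot;\alpha;\cdot))\|\le 1$, whence $\|L\|_{\mathcal{H}'}\le\|y_0\|+\|y_1\|$. Lax--Milgram then produces a unique $z_T$ with $\Lambda(z_T,\tilde z_T)=L(\tilde z_T)$ for all $\tilde z_T\in\mathcal{H}$. Setting $u\overset{\text{def}}{=}\mathds{1}_{G_0}\overline{\mathbb{E}(z(\cdot,\cdot;\alpha;z_T))}$ and comparing with \eqref{eq: carcontrol} gives $\langle\mathbb{E}(y(T,\cdot;\alpha;y_0;u)),\tilde z_T\rangle=\langle y_1,\tilde z_T\rangle$ for every $\tilde z_T$, i.e. $\mathbb{E}(y(T,\cdot;\alpha;y_0;u))=y_1$. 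For the cost, $\|u\|^2=\Lambda(z_T,z_T)=L(z_T)\le\|L\|_{\mathcal{H}'}\|z_T\|\le(\|y_0\|+\|y_1\|)\,C_{exob}\|u\|$, where the last step uses coercivity as $\|z_T\|\le C_{exob}\|u\|$; dividing by $\|u\|$ gives \eqref{def-ex-con-eq1} with $C_{ex}\le C_{exob}$. Combining the two inequalities yields $C_{ex}=C_{exob}$.

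The step I expect to be most delicate is this converse, and within it the verification that the averaged observation map $z_T\mapsto\mathbb{E}(z(\cdot,\cdot;\alpha;z_T))$ is well behaved: unlike the deterministic case one must integrate over the randomness, and as emphasized in Remark \ref{rk:diffproof} it is not transparent that this averaged observation controls $z_T$. Here, however, exact observability is assumed, so coercivity is handed to us and Lax--Milgram applies cleanly; the remaining care is purely bookkeeping of constants, namely ensuring that the operator norm of $z_T\mapsto\mathbb{E}(z(0,\cdot;\alpha;z_T))$ is bounded by $1$ so that no spurious factor enters and the identity $C_{ex}=C_{exob}$ is exact rather than a mere two-sided estimate.
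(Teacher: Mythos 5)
Your proof is essentially correct, but note that the paper does not actually prove Proposition \ref{prop:exavcont} internally: it cites \cite[Theorem A.1]{lu2016averaged}. So your argument is necessarily a different route, and it is a good one. Both directions check out: the easy direction (controllability $\Rightarrow$ observability with $C_{exob}\le C_{ex}$) is the standard duality computation via \eqref{eq: carcontrol} with $y_0=0$, and the converse via Lax--Milgram is sound, since exact observability \eqref{est:exact_obs} is precisely the coercivity $\Lambda(z_T,z_T)\ge C_{exob}^{-2}\|z_T\|^2$ of the Gramian form, while boundedness of $\Lambda$ and of $L$ follows from unitarity of each realization plus the triangle inequality for Bochner integrals (equivalently, from $|\varphi_\alpha|\le 1$ in the Fourier expansion \eqref{eq:Fourdec}). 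This is worth contrasting with the paper's own sketch for the null-controllability analogue, Proposition \ref{Pr: obser_null cont}: there coercivity is unavailable (the null observability inequality \eqref{est:obs} only controls $\mathbb{E}(z(0,\cdot))$, not $z_T$, and as Remark \ref{rk:diffproof} stresses, the Gramian may well be degenerate), which forces the extension-by-density/Riesz argument on a subspace of $L^2((0,T)\times G_0)$. Your approach exploits exactly what the exact setting buys — positive definiteness of the Gramian — and in exchange yields existence, uniqueness of the minimizer, and the sharp constant $C_{ex}\le C_{exob}$ in one stroke; combined with the first direction this gives $C_{ex}=C_{exob}$ for the optimal constants, as claimed.

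One bookkeeping slip to fix: with your definition of $\Lambda$ and the identity \eqref{eq: carcontrol} as stated (whose right-hand side is $\int_0^T\int_{G_0}u\,\overline{\mathbb{E}(z(t,x;\alpha;\tilde z_T))}\,dx\,dt$), the control must be $u=\mathds{1}_{G_0}\,\mathbb{E}(z(\cdot,\cdot;\alpha;z_T))$ \emph{without} the complex conjugate; with the conjugate, the right-hand side becomes $\int_0^T\int_{G_0}\overline{\mathbb{E}(z(t,x;\alpha;z_T))}\;\overline{\mathbb{E}(z(t,x;\alpha;\tilde z_T))}\,dx\,dt$, which is a conjugated bilinear (not sesquilinear) pairing and does not equal $\Lambda(z_T,\tilde z_T)$, so the identification $\mathbb{E}(y(T,\cdot;\alpha;y_0;u))=y_1$ breaks. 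Dropping the conjugate (consistent with the choice of control in Section \ref{Sec5}) repairs the argument verbatim, including the cost estimate $\|u\|^2=\Lambda(z_T,z_T)=L(z_T)\le(\|y_0\|+\|y_1\|)C_{exob}\|u\|$.
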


\begin{remark}
It is equivalent to prove the averaged null (resp. exact) observability of \eqref{sys:adjSchr} as to prove that there is $C>0$ 
such that  the solutions of:
\begin{equation}\label{sys:adjSchrtimerev}
\begin{cases}
\partial_t z +\alpha \mathrm{i} \Delta z=0,& \mbox{ on }Q_T,\\
z=0, & \mbox{ on }\Sigma_T,\\
z(0,\cdot)=z_0, & \mbox{ in } G,
\end{cases}
\end{equation}
satisfy for all $z_0\in \mathcal{H}$ that:
\begin{equation}\label{est:obstimegood}
\left\|\mathbb{E}(z(T,\cdot;\alpha;z_0))\right\|\leq C \left\|\mathbb{E}(z(\cdot,\cdot;\alpha;z_{0}))\right\|_{L^2((0,T)\times G_0)}\end{equation}
\begin{equation}\label{est:obsexact}
\left(\mbox{resp.}\; \left\|z_0\right\|\leq C\left\|\mathbb{E}(z(\cdot,\cdot;\alpha;z_{0}))\right\|_{L^2((0,T)\times G_0)} \right).
\end{equation}
\end{remark}

In order to work with this system, the Fourier decomposition will be key.\\ 
Consider $\Lambda_\lambda \overset{\text{def}}{=} \{n:\lambda_n\leq \lambda\}$ for any $\lambda > 0$, and we denote by $\mathcal{P}_\lambda$ (resp. $\mathcal{P}^\perp_\lambda$) the orthogonal projection of $\mathcal{H}$ onto $\langle e_n \rangle_{n \in \Lambda_\lambda}$ (resp. $\langle e_n \rangle_{n \in \Lambda_\lambda}^\perp$). Then, using the orthogonality, we have that the average of \eqref{sys:adjSchrtimerev} is given by:
\begin{equation}\label{eq:Fourdec} \mathbb{E}(z(t, \cdot;\alpha;z_0))=\sum_{n\in\mathbb N}\langle z_0,e_n\rangle\varphi_{\alpha}(\lambda_{n}t)e_n,
\end{equation}
where $\varphi_{\alpha}$ is given in \eqref{def:charfunc}. 
\subsection{Main Hypothesis \texorpdfstring{\HypH}{HypH}} \label{Sec:HYP} 
We assume that the probability distribution $\mu_{\alpha}$ of $\alpha$ is given such that the Hypothesis \HypH is satisfied.
One consequence of Hypothesis \HypH is that $\mu_{\alpha}$ induces an absolutely continuous probability distribution with a probability kernel $\rho_{\alpha}\in C^\infty(\mathbb{R})$.
\begin{proposition} \label{Pr:absolutely continuous}
	Assume that Hypothesis \HypH holds. Then, $\mu_{\alpha}$ is an absolutely continuous 
	probability distribution 
	with a kernel $\rho_{\alpha}\in C^\infty(\mathbb{R})$.
\end{proposition}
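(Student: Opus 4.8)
The plan is to extract from Hypothesis \HypH a quantitative decay estimate on $\varphi_\alpha$ and then invoke the classical Fourier inversion theorem for characteristic functions. First I would specialize the inequality \eqref{eq:hypSch} to the single endpoint pair $(t_1,t_2)=(0,T_0)$. Since $\varphi_\alpha(0)=\mathbb{E}(e^{\mathrm{i}0\cdot\alpha})=1$, this yields, for every $\lambda\geq 0$,
\begin{equation*}
|\varphi_\alpha(\lambda T_0)|\leq e^{-c\lambda^r T_0^{\theta}}.
\end{equation*}
Substituting $s=\lambda T_0$ (so that $\lambda=s/T_0$ and $\lambda^r=s^r T_0^{-r}$) gives $|\varphi_\alpha(s)|\leq e^{-c'\, s^{r}}$ for all $s\geq 0$, with $c'=c\,T_0^{\theta-r}>0$. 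Because $\alpha$ is real, its characteristic function satisfies $\varphi_\alpha(-s)=\overline{\varphi_\alpha(s)}$, hence $|\varphi_\alpha(-s)|=|\varphi_\alpha(s)|$, and I obtain the two-sided bound
\begin{equation*}
|\varphi_\alpha(s)|\leq e^{-c'\,|s|^{r}},\qquad s\in\mathbb{R}.
\end{equation*}

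Next, since $r>0$, the function $s\mapsto e^{-c'|s|^r}$ is integrable on $\mathbb{R}$, and so is $s\mapsto |s|^k e^{-c'|s|^r}$ for every $k\in\mathbb{N}$; consequently $\varphi_\alpha\in L^1(\mathbb{R})$ and $s^k\varphi_\alpha\in L^1(\mathbb{R})$ for all $k$. By the Fourier inversion theorem for characteristic functions (a probability measure whose characteristic function lies in $L^1$ is absolutely continuous, with a bounded continuous density recovered by inverse Fourier transform), $\mu_\alpha$ is absolutely continuous with density
\begin{equation*}
\rho_\alpha(x)=\frac{1}{2\pi}\int_{-\infty}^{\infty}e^{-\mathrm{i}sx}\varphi_\alpha(s)\,ds.
\end{equation*}

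Finally, to obtain smoothness I would differentiate under the integral sign: the integrability of $s^k\varphi_\alpha$ for all $k$ justifies repeated differentiation and yields $\rho_\alpha^{(k)}(x)=\frac{1}{2\pi}\int_{-\infty}^{\infty}(-\mathrm{i}s)^k e^{-\mathrm{i}sx}\varphi_\alpha(s)\,ds$, each derivative being continuous by dominated convergence (dominant $|s|^k e^{-c'|s|^r}\in L^1$). Hence $\rho_\alpha\in C^\infty(\mathbb{R})$.

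I do not expect a genuine obstacle here; the argument is routine once the decay bound is secured. The only points requiring care are, first, recognizing that the full strength of the hypothesis is unnecessary for this statement — one merely evaluates \eqref{eq:hypSch} at $(t_1,t_2)=(0,T_0)$ and uses $\varphi_\alpha(0)=1$ — and, second, citing the correct form of the inversion theorem. I would also remark that the restriction $r>\tfrac12$ plays no role in this proposition: any $r>0$ suffices for both the integrability of $\varphi_\alpha$ and the $C^\infty$ regularity of $\rho_\alpha$, the stronger lower bound on $r$ being relevant only later, for the observability estimate.
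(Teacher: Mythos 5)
Your proposal is correct and follows essentially the same route as the paper: both specialize \eqref{eq:hypSch} to the pair $(t_1,t_2)=(0,T_0)$ with $\varphi_\alpha(0)=1$ to obtain the stretched-exponential decay $|\varphi_\alpha(s)|\leq e^{-cT_0^{\theta-r}|s|^r}$, then invoke Fourier inversion to identify the density and differentiate under the integral sign for $C^\infty$ regularity. Your closing remarks — that only this one endpoint pair of the hypothesis is used and that any $r>0$ suffices here, the condition $r>\tfrac12$ mattering only for the observability argument later — are accurate refinements of the same argument, not a different method.
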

\begin{proof}
	Let us recall the elementary properties of the \textbf{CF} of $\alpha$: 
	\begin{itemize}
		\item $\varphi_{\alpha}$ is continuous in $\mathbb{R}$,
		\item For all $s\in\mathbb{R}$, $|\varphi_{\alpha}(-s)|=|\varphi_{\alpha}(s)|$ and $\varphi_{\alpha}(0)=1$.
	\end{itemize}
	Let us consider the function 
	\[\rho_{\alpha}(x)=\frac{1}{2\pi} \int_{-\infty}^\infty e^{-\mathrm{i}sx}\vp_\alpha(s)ds.\]
	Using the bijectivity of the Fourier transform (see \cite[Section 3.3]{strichartz1994guide}), we have $\rho_{\alpha}(x)dx=d\mu_{\alpha}(x)$ as tempered distributions (and, in particular, as signed measures).
	To prove $\rho_{\alpha}\in C^{\infty}(\mathbb{R})$, we differentiate under the integral sign.
	Using \eqref{eq:hypSch}, we can see that $\varphi_\alpha$ decays faster than any polynomial. Taking $t_2=T_0$ and $t_1=0$, we obtain that for any real $s$ such that $|s|$ is sufficiently large:
	\[|\vp_\alpha(s)|=\left|\vp_{\alpha}\left(\frac{|s|}{T_0}T_0\right)\right|\leq 
	e^{-c|s|^rT_{0}^{\theta-r}}.
	\]
	Hence, for all $x, s \in\mathbb{R}$ and $n\in\mathbb{N}$, 
	$\left|\frac{\partial^{n}}{\partial x^n}(e^{-\mathrm{i}sx}\vp_\alpha(s))\right|\leq |s^n\varphi_{\alpha}(s)|$ and $s\mapsto |s^n\varphi_{\alpha}(s)|\in L^{1}(\mathbb{R})$. Thus, we deduce $\rho_{\alpha}\in C^\infty(\mathbb R)$.
\end{proof}
\par
A second remark is that Hypothesis \HypH is invariant by linear transformation of a random variable and sum of independent random variables:
\begin{remark}
	Let $\beta=a\alpha + b$ be the linear transformation of a random variable $\alpha$, where $a, b\in\mathbb{R}$ and $a\neq 0$. The \textbf{CF} of $\beta$ is given by 
	\[\varphi_\beta(s)=e^{\mathrm{i}sb}\varphi_{\alpha}(as), \quad s\in\mathbb{R}. \]
	As a result, Hypothesis \HypH is invariant by linear transformation of a random variable. It is also invariant to the sum of independent random variables, due to
	\[ \varphi_{\alpha}(s)=\prod_{k=1}^{n}\varphi_{\alpha_{k}}(s), \quad s\in\mathbb{R},\]
	where $\alpha=\displaystyle\sum_{k=1}^{n}\alpha_{k}$ and $\alpha_1, \cdots, \alpha_{n}$ are independent random variables.
\end{remark}
\par
Now, to provide examples, we are going to focus on absolutely continuous random variables.
\begin{example}
	A random variable $\alpha$ with normal distribution has \textbf{PDF} $\rho_{\alpha}$ given by
	\[ \rho_{\alpha}(\xi)\overset{\text{def}}{=}\frac{1}{\sqrt{2\pi \sigma^{2}}}e^{\frac{-(\xi-\mu)^{2}}{2\sigma^{2}}}, \quad \xi\in\mathbb{R},\]
	where $\mu\in\mathbb{R}$ (mean) and $\sigma^{2}>0$ (variance) and its \textbf{CF} is given by 
	\[ \varphi_{\alpha}(s)=e^{\mathrm{i}\mu s -\frac{\sigma^{2}s^{2}}{2}}, \quad s\in\mathbb{R}\]
	satisfies the estimate \eqref{eq:hypSch} for $c=\frac{\sigma^{2}}{2}$ and $r=\theta=2$ (as $t_2^2-t_1^2=(t_2-t_1)(t_2+t_1)\geq(t_2-t_1)^2$).
\end{example}
\begin{example}
	A random variable $\alpha$ with Cauchy distribution has \textbf{PDF} $\rho_{\alpha}$ given by
	\[ \rho_{\alpha}(\xi)\overset{\text{def}}{=}\frac{1}{\pi\gamma\left[ 1+\left(\frac{\xi-x_0}{\gamma}\right)^{2}\right]}, \quad \xi\in\mathbb{R},\]
	where $x_0\in\mathbb{R}$ (location) and $\gamma>0$ (scale) and its \textbf{CF} is given by 
	\[ \varphi_{\alpha}(s)=e^{\mathrm{i}x_0 s-\gamma|s|}, \quad s\in\mathbb{R}\]
	satisfies the estimate \eqref{eq:hypSch} for $c=\gamma$ and $r=\theta=1$.
\end{example}
\par 
We can also provide an interesting class of random variables that meet the Hypothesis \HypH, specifically that of stable distributions as defined in \cite{klebanov2003heavy}.
\begin{definition}
	A random variable $\alpha$ has stable distribution if and only if the \textbf{CF} of $\alpha$ has the form:
	\begin{eqnarray*}
		\varphi_{\alpha}(s)\overset{\text{def}}{=}\exp\left[\mathrm{i}\mu s-c|s|^{r}\left(1+\mathrm{i}\beta\text{sign}(s)\Phi(s,r)\right)\right],
	\end{eqnarray*}
	where $r\in (0,2]$ is the stability parameter, $\mu\in\mathbb{R}$ is a shift parameter, $\beta\in [-1,1]$ called the skewness parameter, $c>0$, $\text{sign}(s)$ is the sign of $s$ and 
	\begin{eqnarray*}
		\Phi(s,r)\overset{\text{def}}{=}\begin{cases}
			\tan\left(\frac{\pi r}{2}\right)\quad & r\neq 1,\\
			-\frac{2}{\pi}\log(|s|) \quad & r= 1.
		\end{cases}
	\end{eqnarray*}
\end{definition}
\begin{remark}
	Note that, if $(r,\beta)=(2,0)$ and $(r,\beta)=(1,0)$ we find the normal distribution and the Cauchy distribution, respectively. All stable distributions are absolutely continuous and have continuous densities; see Theorem 2.3 of \cite{klebanov2003heavy}. 
\end{remark}
\begin{remark}
	For $r \in\left(1/2, 2\right]$, by an elementary proof we can show that any random variable with stable distribution satisfies Hypothesis \HypH.  In fact, if $r\in(1/2,1]$, the function $t\mapsto t^{\frac{1}{r}}$ is Lipschitz on $[0,T^r]$, so, if $t_1,t_2\in[0,T]$ such that $t_1<t_2$, we have: $$t_2-t_1=(t_2^r)^{1/r}-(t_1^r)^{1/r}\leq L_r(t_2^r-t_1^r),$$ for some $L_r>0$. Hence,
    \[|\varphi_\alpha(\lambda t_2)|=e^{-c\lambda^rt_2^r}\leq e^{-\frac{c\lambda^r}{L_r}(t_2-t_1)}|\varphi_\alpha(\lambda t_1)|.
    \]
    When $r\in (1,2]$, for all $t_1,t_2\in[0,T]$ such that $t_1<t_2$ we have that: $$(t_2 -t_1)^r\leq t_2^r-t_1^r,$$ as $x\mapsto (t_2-x)^r -t_2^r+ x^r$ is convex in $[0,t_2]$ and null on $0$ and $t_2$. Hence,
    \[|\varphi_\alpha(\lambda t_2)|=e^{-c\lambda^rt_2^r}\leq e^{-c\lambda^r(t_2-t_1)^r}|\varphi_\alpha(\lambda t_1)|.
    \]
\end{remark}

\section{Averaged null controllability} \label{Sec3}
In this section, we present our main result regarding the averaged null 
controllability of the Schrödinger system \eqref{sys:Schr}.
\begin{theorem} \label{Main result}
	Let $G \subset \mathbb{R}^{d}$ be a Lipschitz locally star-shaped domain, $G_0 \subset G$ be a subset of strictly positive measure, $T>0$ and $\alpha$ a random variable whose characteristic function satisfies Hypothesis \HypH for certain parameters $c>0$, $r>\frac{1}{2},\; \theta>0$ and $T_0\in (0,T]$. Then, there are $C>0$ and $T^{\prime}\in (0, T_0]$ such that, for all $T_1\in (0, T^\prime]$, 
    system \eqref{sys:Schr} is null controllable in
	average in time $T_1$,
    and the cost of null controllability satisfies:
	\[ C_{null}(G,G_0, \alpha, T_1) \leq C e^{C T_1^{-\theta(2r-1)^{-1}}}.\]
\end{theorem}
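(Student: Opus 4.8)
By Proposition~\ref{Pr: obser_null cont}, it suffices to prove the null averaged observability inequality \eqref{est:obstimegood}, and by the Fourier decomposition \eqref{eq:Fourdec} the averaged observable has the explicit form $\mathbb{E}(z(t,\cdot;\alpha;z_0))=\sum_{n}\langle z_0,e_n\rangle\varphi_\alpha(\lambda_n t)e_n$. The key structural fact is that Hypothesis \HypH furnishes an exponential dissipation on each Fourier mode: the map $t\mapsto|\varphi_\alpha(\lambda_n t)|$ decays like $e^{-c\lambda_n^r t^\theta}$ relative to earlier times. The plan is therefore to follow the spectral approach of Miller \cite{miller2010observability} (as used in \cite{barcena2021averaged} for the heat equation): first establish a \emph{spectral inequality} controlling low-frequency packets $\mathcal{P}_\lambda z_0$ from the observation region $G_0$, then combine it with the mode-wise dissipation coming from \HypH to absorb the high-frequency tail, and finally optimize over a telescoping partition of the time interval to recover the observability constant with the stated blow-up $Ce^{CT_1^{-\theta(2r-1)^{-1}}}$.

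\textbf{Step 1: the spectral inequality.} The first ingredient is a Lebeau--Robbiano type spectral estimate: there exist constants $C_1,C_2>0$ such that for every $\lambda>0$ and every $a\in\langle e_n\rangle_{n\in\Lambda_\lambda}$ one has $\|a\|^2\leq C_1 e^{C_2\sqrt{\lambda}}\int_{G_0}|a|^2\,dx$. This is the standard sum-of-eigenfunctions spectral inequality, valid on Lipschitz locally star-shaped domains, which is exactly where that geometric hypothesis on $G$ enters; I would quote it from the literature rather than reprove it. It controls the projection $\mathcal{P}_\lambda z_0$ onto low modes by the restriction to $G_0$.

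\textbf{Step 2: from spectral inequality to observability on a time slab.} Next I would convert the elliptic spectral inequality into an estimate on $\mathbb{E}(z)$ over a short time interval. Since the multipliers $\varphi_\alpha(\lambda_n t)$ satisfy the monotone decay \eqref{eq:hypSch}, on an interval $(t_1,t_2)\subset[0,T_0]$ one controls the low-frequency part of $\mathbb{E}(z(t_2,\cdot))$ by $\int_{t_1}^{t_2}\int_{G_0}|\mathbb{E}(z(t,\cdot))|^2$, while the high-frequency contribution $\mathcal{P}_\lambda^\perp$ is dissipated by the factor $e^{-c\lambda^r(t_2-t_1)^\theta}$ guaranteed by \HypH. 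Concretely, writing $F(t)=\|\mathbb{E}(z(t,\cdot))\|^2$ and using that $|\varphi_\alpha|$ is monotone-decreasing along each mode, I obtain a dissipation estimate of the form $\|\mathcal{P}_\lambda^\perp \mathbb{E}(z(t_2,\cdot))\|^2\le e^{-2c\lambda^r(t_2-t_1)^\theta}\|\mathcal{P}_\lambda^\perp \mathbb{E}(z(t_1,\cdot))\|^2$, which is the averaged analogue of the parabolic smoothing used in Miller's method.

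\textbf{Step 3: telescoping and optimization.} I would then partition $(0,T_1)$ into a geometric sequence of subintervals $\{(\tau_{k+1},\tau_k)\}$ accumulating at $0$, apply the spectral inequality at a frequency $\lambda_k$ increasing along the sequence on each slab, and balance the observation gain $e^{C_2\sqrt{\lambda_k}}$ against the dissipation gain $e^{-c\lambda_k^r(\tau_k-\tau_{k+1})^\theta}$. The requirement $r>\tfrac12$ is precisely what makes $\lambda^r$ dominate $\sqrt{\lambda}=\lambda^{1/2}$ as $\lambda\to\infty$, so the dissipation wins at high frequency and the telescoped sum of observation costs converges; optimizing the interval lengths and frequencies yields the final cost $Ce^{CT_1^{-\theta(2r-1)^{-1}}}$, where the exponent $\theta(2r-1)^{-1}$ comes from balancing $\lambda^{1/2}$ against $\lambda^r(\text{length})^\theta$. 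This absorption/optimization is the technical heart and the main obstacle: unlike the self-adjoint heat semigroup, here each multiplier $\varphi_\alpha(\lambda_n t)$ is only controlled in modulus by \HypH and need not be real or monotone as a genuine semigroup, so care is needed to justify the mode-wise dissipation purely from \eqref{eq:hypSch} and to keep the constants uniform over all admissible $T_1\in(0,T']$.
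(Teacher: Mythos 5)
Your proposal is correct and follows essentially the same route as the paper's proof: the spectral inequality of Apraiz et al.\ on the low modes $\mathcal{P}_\lambda z_0$ (Lemma \ref{lm:obsellG0}), the mode-wise dissipation $e^{-c\lambda^r(t_2-t_1)^\theta}$ for the high-frequency tail drawn from Hypothesis \HypH via Parseval (Lemma \ref{rk:decreasenorm} and \eqref{eq:t2andt1}), and the telescoping/geometric time partition in the style of Miller (Lemma \ref{lm:ineqprac}), with the frequency choice $\lambda\sim(t_2-t_1)^{-2\theta/(2r-1)}$ balancing $e^{C\sqrt{\lambda}}$ against $e^{-c\lambda^r(t_2-t_1)^\theta}$ exactly as you describe. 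Your closing concern about the multipliers not forming a semigroup is handled in the paper precisely as you anticipate: the dissipation is justified purely in modulus from \eqref{eq:hypSch}, applied mode by mode.
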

As explained in the Introduction, we obtain an observability inequality with a spectral approach as in \cite{miller2010observability}. In order to derive an elliptic observability estimate for general subsets of strictly positive measure, by exploiting results from the literature, we begin with the following technical lemma.
\begin{lemma}
	Let $G \subset \mathbb{R}^d$ be a Lipschitz domain, and let $G_0 \subset G$ be a subset of strictly positive measure. Then there exist $x_0\in G$ and $r_0 \in (0,1]$ such that
	\[
	|G_0 \cap B(x_0, r_0)| > 0 \quad \text{and} \quad B(x_0, 4r_0) \subset G.
	\]
\end{lemma}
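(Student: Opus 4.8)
The plan is to produce $x_0$ as a Lebesgue density point of $G_0$ and then to choose the radius small enough to force the inclusion. The two demands pull in opposite directions: the containment $B(x_0,4r_0)\subset G$ requires $r_0$ to be no larger than a quarter of the distance from $x_0$ to $\partial G$, whereas $|G_0\cap B(x_0,r_0)|>0$ wants the ball large enough to detect mass of $G_0$. A density point reconciles these, since it carries positive mass of $G_0$ at every scale and so lets us shrink $r_0$ as freely as the inclusion demands.

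Concretely, I would first recall that $G_0$ is Lebesgue measurable with $|G_0|>0$, so by the Lebesgue density theorem almost every point of $G_0$ is a point of density one; in particular such points exist, and I fix one, $x_0\in G_0$. Since $G_0\subset G$ we have $x_0\in G$, and as $G$ is a Lipschitz domain, hence open, there is $\delta>0$ with $B(x_0,\delta)\subset G$.

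Then I would set $r_0:=\min\{1,\delta/4\}\in(0,1]$. The inclusion is immediate: $4r_0\le\delta$ gives $B(x_0,4r_0)\subset B(x_0,\delta)\subset G$. For the measure condition, density one at $x_0$ yields $|G_0\cap B(x_0,r)|>0$ for all sufficiently small $r>0$; since $r\mapsto|G_0\cap B(x_0,r)|$ is nondecreasing and $r_0>0$, we conclude $|G_0\cap B(x_0,r_0)|>0$. Both requirements hold, which proves the lemma.

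I do not expect a genuine obstacle here; the single delicate point is the joint satisfaction of the two conditions, and this is exactly what the density point supplies. If one prefers to avoid the density theorem, the same conclusion follows by writing $G$ as the increasing union of the open sets $G_k:=\{x\in G:\operatorname{dist}(x,\mathbb{R}^d\setminus G)>4/k\}$, using continuity of measure to get $|G_0\cap G_k|>0$ for some $k$, covering $G_k$ by countably many balls of radius $r_0:=\min\{1,1/k\}$ centered at points of $G_k$, and selecting one such ball in which $G_0$ has positive measure; its center $x_0\in G_k$ then satisfies $B(x_0,4r_0)\subset G$ because $r_0\le 1/k$.
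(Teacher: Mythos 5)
Your proof is correct, but it follows a genuinely different route from the paper's. You anchor the construction at a Lebesgue density point of $G_0$: such a point automatically lies in the open set $G$, so the radius can be shrunk freely to meet the containment $B(x_0,4r_0)\subset G$, while density one guarantees positive mass of $G_0$ in every small ball --- this resolves the tension between the two requirements in one stroke. The paper instead argues by exhaustion and pigeonhole: it sets $G_\delta=\{x\in G:\, d(x,\partial G)\geq\delta\}$, notes that $|G_0\cap G_\delta|>0$ for $\delta$ small by continuity of measure, covers the compact set $G_\delta$ by finitely many balls $B(x_i,\delta/4)$ with centers in $G_\delta$, and selects a ball carrying positive measure of $G_0$; the choice $r_0=\delta/4$ then gives $B(x_0,4r_0)=B(x_0,\delta)\subset G$. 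Your fallback argument (countable cover of the inner sets $G_k$) is essentially the paper's proof with countable subadditivity replacing compactness, and as such is the closest match. As for what each approach buys: yours invokes a deeper tool (the density theorem) but is shorter and needs only that $G$ be open, so boundedness of $G$ is irrelevant; the paper's is entirely elementary (continuity of measure, compactness, pigeonhole) but leans on compactness of $G_\delta$, hence on boundedness of the domain. In all of these arguments the Lipschitz hypothesis plays no role beyond openness; it is needed only for the subsequent elliptic observability lemma.
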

We provide a short proof for the sake of completeness, even if it is a standard result:
\begin{proof}
Let us define $G_\delta=\{x\in G:d(x,\partial G)\geq\delta\}$, where $d(x,\partial G)$ is the Euclidean distance from $x$ to $\partial G$. For $\delta$ small enough (and, in particular, $\delta\leq4$), we have $|G_0\cap G_\delta|>0$. By compactness of $G_\delta$, there are $x_1,\ldots,x_n\in G_\delta$ such that $G_\delta\subset\bigcup_{i=1}^nB(x_i,\delta/4)$. Clearly, one of such balls must satisfy the required result.
\end{proof}


Now, applying Theorem 3 and Theorem 5 of \cite{apraiz2014observability} for the observation set $G_0 \cap B(x_0, r_0)$, we obtain the following elliptic observability:
\begin{lemma}\label{lm:obsellG0}
	Let $G\subset \mathbb{R}^{d}$ be a Lipschitz locally star-shaped domain, $G_0\subset G$ be a subset of strictly positive measure, and $\{e_n\}$ be the orthonormal eigenfunctions of the Dirichlet Laplacian.
    Then, there exists a constant
	$C>0$ such that for all $\lambda>0$ and 
	$\{c_n\}\subset\bb R$:
	\begin{equation}\label{est:seqobs}
		\left(\sum_{n\in\Lambda_\lambda}|c_n|^2\right)^{1/2}\leq C
		e^{C\sqrt \lambda}\left\|\sum_{n\in\Lambda_\lambda} c_ne_n
		\right\|_{L^2(G_0)}.
	\end{equation}
\end{lemma}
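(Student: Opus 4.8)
The plan is to reduce the inequality, which concerns observation on the whole of $G_0$, to observation on a well-placed interior ball, and then to invoke the spectral inequalities of \cite{apraiz2014observability} essentially verbatim. By the preceding lemma there are $x_0\in G$ and $r_0\in(0,1]$ with $|G_0\cap B(x_0,r_0)|>0$ and $B(x_0,4r_0)\subset G$; I set $\omega:=G_0\cap B(x_0,r_0)$, a subset of strictly positive measure sitting inside the interior ball $B(x_0,r_0)$. Writing $u:=\sum_{n\in\Lambda_\lambda}c_ne_n$, orthonormality of $\{e_n\}$ in $L^2(G)$ gives the Parseval identity $\left(\sum_{n\in\Lambda_\lambda}|c_n|^2\right)^{1/2}=\|u\|_{L^2(G)}$. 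Hence it suffices to establish $\|u\|_{L^2(G)}\le Ce^{C\sqrt\lambda}\|u\|_{L^2(\omega)}$ and then to use the trivial monotonicity $\|u\|_{L^2(\omega)}\le\|u\|_{L^2(G_0)}$, valid since $\omega\subset G_0$, to conclude \eqref{est:seqobs}.

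The core of the argument is exactly the spectral inequality for observation on $\omega$, and this is what Theorem 3 and Theorem 5 of \cite{apraiz2014observability} supply. Since $u$ is a finite combination of Dirichlet eigenfunctions with eigenvalues at most $\lambda$, and $G$ is a Lipschitz locally star-shaped domain, these results apply once the geometric data are in place: here the buffer condition $B(x_0,4r_0)\subset G$ is precisely what is needed so that $\omega$ lies well inside the domain and the interior propagation-of-smallness estimate can be run. Concretely, Theorem 3 furnishes the quantitative interpolation (three-ball type) estimate for an elliptic-type solution, and Theorem 5 assembles this across the frequency range $\lambda_n\le\lambda$ into the spectral inequality for the finite sum $u$, with the characteristic cost $e^{C\sqrt\lambda}$; the upshot is $\|u\|_{L^2(G)}\le Ce^{C\sqrt\lambda}\|u\|_{L^2(\omega)}$, where I emphasize that the observation set has been taken to be $G_0\cap B(x_0,r_0)$ rather than a full ball, which is exactly what the measurable-set version of these theorems permits.

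The computations themselves are not the difficulty; the crux is to check that our configuration meets the hypotheses of the two cited theorems — in particular that local star-shapedness of $G$ together with $B(x_0,4r_0)\subset G$ provides the geometric ingredients they require — and to confirm that the cost comes with the sharp exponent $e^{C\sqrt\lambda}$ rather than, say, $e^{C\lambda}$. This $\sqrt\lambda$ rate is not a cosmetic detail: it is what makes the Miller-type \cite{miller2010observability} telescoping feasible and ultimately produces the control cost $Ce^{CT_1^{-\theta(2r-1)^{-1}}}$ announced in Theorem \ref{Main result}. Once the constants from the two chained applications are tracked (the exponential rates add in the exponent and the prefactors multiply, leaving an estimate of the same form $Ce^{C\sqrt\lambda}$), the combination with Parseval and the monotonicity step gives the claim at once.
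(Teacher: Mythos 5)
Your proposal is correct and follows essentially the same route as the paper: the paper likewise takes the point $x_0$ and radius $r_0$ from the preceding technical lemma and applies Theorems 3 and 5 of \cite{apraiz2014observability} to the observation set $G_0\cap B(x_0,r_0)$, which is exactly your $\omega$. The Parseval identity and the monotonicity step $\|u\|_{L^2(\omega)}\leq\|u\|_{L^2(G_0)}$ that you make explicit are left implicit in the paper's one-line derivation, so there is nothing to correct.
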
\noindent
This result is an improved version of \cite[Theorem 1.2]{lu2013lower},
which was first proved for more regular cases in \cite{lebeau1995controle}. 
\par 
In order to estimate the cost of the control, it suffices to
prove the analogue of \cite[Lemma 2.3]{miller2010observability}:
\begin{lemma}\label{lm:ineqprac}
	Let $G\subset\bb R^d$ be a domain, $G_0\subset G$ be a subset of strictly positive measure, 
	$T_0,\beta,\gamma_1,\gamma_2,f_0,g_0>0$ satisfying 
	$\gamma_1<\gamma_2$. Suppose that we have
	for all $z_0 \in \mathcal{H}$ and all $t_1,t_2\in (0,T_0]$ satisfying $t_1<t_2$
	the  inequality:
	\begin{equation}\label{eq:recriwthfg}
		f(t_2-t_1)\|\mathbb{E}(z(t_2,\cdot;\alpha;z_{0}))\|^2-g(t_2-t_1)
		\|\mathbb{E}(z(t_1,\cdot;\alpha;z_{0}))\|^2
		\leq \int_{t_1}^{t_2} \int_{G_0}|\mathbb{E}(z(\tau,x;\alpha;z_{0}))|^2dxd\tau, 
	\end{equation}
	where $f$ and $g$ satisfy $f(s)\geq f_0\exp(-2/(\gamma_2s)^\beta)$, $\dis\lim_{s\to0^+}f(s)=0$ and $g(s)\leq g_0\exp(-2/(\gamma_1s)^\beta)$.
	Then, for any $\gamma\in(0,\gamma_2-\gamma_1)$ there is $T'\in (0,T_0]$ such that
	for all $T_1\in (0,T']$ and $z_0\in \mathcal{H}$:
	\[\left\|\mathbb{E}(z(T_1,\cdot;\alpha;z_{0}))\right\|\leq \sqrt{f_0^{-1}}
	\exp(1/(\gamma T_1)^\beta)
	\|\mathbb{E}(z(\cdot,\cdot;\alpha;z_{0}))\|_{L^2((0,T_1)\times G_0)}.\]
\end{lemma}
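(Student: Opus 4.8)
The plan is to run the telescoping-series (Lebeau--Robbiano/Miller) argument: iterate the recurrence \eqref{eq:recriwthfg} along a geometric sequence of times collapsing to $0$, and add the resulting inequalities so that the negative $g$-terms at one scale are absorbed by the positive $f$-terms at the next scale, the absorption being possible precisely because $\gamma_1<\gamma_2$. First I would fix, for the given $\gamma\in(0,\gamma_2-\gamma_1)$, a ratio $\sigma\in(\gamma_1/\gamma_2,\,1-\gamma/\gamma_2]$; this interval is nonempty exactly because $\gamma<\gamma_2-\gamma_1$. Then I set $\tau_k=T_1\sigma^k$ for $k\geq0$, so that $\tau_0=T_1$, $\tau_k\downarrow0$, and the consecutive gaps $d_k:=\tau_{k-1}-\tau_k=T_1(1-\sigma)\sigma^{k-1}$ form a geometric sequence with $d_{k+1}=\sigma d_k$. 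Applying \eqref{eq:recriwthfg} on each interval $[\tau_k,\tau_{k-1}]\subset(0,T_0]$ gives, for every $k\geq1$,
\[
f(d_k)\|\mathbb{E}(z(\tau_{k-1},\cdot;\alpha;z_0))\|^2-g(d_k)\|\mathbb{E}(z(\tau_k,\cdot;\alpha;z_0))\|^2\leq\int_{\tau_k}^{\tau_{k-1}}\int_{G_0}|\mathbb{E}(z(\tau,x;\alpha;z_0))|^2\,dx\,d\tau.
\]

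The heart of the argument is the absorption step. Summing over $k=1,\dots,N$ and reindexing the $f$-sum by $j=k-1$, the coefficient of each intermediate $\|\mathbb{E}(z(\tau_k))\|^2$ (for $1\leq k\leq N-1$) becomes $f(d_{k+1})-g(d_k)$. Using the lower bound on $f$, the upper bound on $g$, and $d_{k+1}=\sigma d_k$, nonnegativity of this coefficient reduces to
\[
\frac{2}{d_k^\beta}\left(\frac{1}{\gamma_1^\beta}-\frac{1}{(\gamma_2\sigma)^\beta}\right)\geq\log(g_0/f_0),
\]
whose left-hand bracket is strictly positive since $\gamma_2\sigma>\gamma_1$. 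As $d_k\leq d_1=(1-\sigma)T_1$ and $d_k$ decreases in $k$, the inequality holds for all $k\geq1$ as soon as it holds at $k=1$, which is guaranteed by choosing $T_1\leq T'$ for a threshold $T'\in(0,T_0]$ depending only on $\beta,\gamma_1,\gamma_2,\sigma,f_0,g_0$ (and $T'=T_0$ works when $g_0\leq f_0$, the log being then nonpositive). With all intermediate coefficients nonnegative, the telescoped left-hand side is bounded below by $f(d_1)\|\mathbb{E}(z(\tau_0))\|^2-g(d_N)\|\mathbb{E}(z(\tau_N))\|^2$, while the right-hand sides add up to at most $\|\mathbb{E}(z)\|_{L^2((0,T_1)\times G_0)}^2$ because the intervals $[\tau_k,\tau_{k-1}]$ tile $(0,T_1]$.

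Finally I would pass to the limit $N\to\infty$. Since $|\varphi_\alpha(\lambda_n t)|\leq1$ one has $\|\mathbb{E}(z(t,\cdot;\alpha;z_0))\|\leq\|z_0\|$ from the Fourier decomposition \eqref{eq:Fourdec}, and $g(d_N)\leq g_0\exp(-2/(\gamma_1 d_N)^\beta)\to0$ as $d_N\to0$; hence the boundary term $g(d_N)\|\mathbb{E}(z(\tau_N))\|^2$ vanishes, leaving
\[
f_0\exp\!\big(-2/(\gamma_2 d_1)^\beta\big)\|\mathbb{E}(z(T_1,\cdot;\alpha;z_0))\|^2\leq f(d_1)\|\mathbb{E}(z(T_1,\cdot;\alpha;z_0))\|^2\leq\|\mathbb{E}(z)\|_{L^2((0,T_1)\times G_0)}^2.
\]
Because $d_1=(1-\sigma)T_1$ and $\gamma_2(1-\sigma)\geq\gamma$ by the choice of $\sigma$, we have $\exp(1/(\gamma_2 d_1)^\beta)\leq\exp(1/(\gamma T_1)^\beta)$; taking square roots gives the claimed estimate with constant $\sqrt{f_0^{-1}}$.

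I expect the main obstacle to be the bookkeeping in the absorption step: choosing the geometric ratio $\sigma$ so that simultaneously $\gamma_2\sigma>\gamma_1$ (which makes the intermediate coefficients nonnegative for small $T_1$) and $\gamma_2(1-\sigma)\geq\gamma$ (which produces the correct final rate). Both constraints are compatible exactly when $\gamma<\gamma_2-\gamma_1$, so the separation $\gamma_1<\gamma_2$ between the two exponential rates is precisely what drives the argument; the remaining steps, namely the finite summation, the tiling of $(0,T_1]$, and the harmless vanishing of the boundary term, are routine.
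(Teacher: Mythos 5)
Your proposal is correct and follows essentially the same route as the paper: the same Lebeau--Robbiano/Miller telescoping argument along the geometric times $T_1\sigma^k$, with the $g$-term at each scale absorbed by the $f$-term at the next scale via $g(s)\leq f(\sigma s)$ for small $s$ (the paper fixes $\sigma=q=1-\gamma/\gamma_2$, which lies in your admissible interval $(\gamma_1/\gamma_2,\,1-\gamma/\gamma_2]$), and the boundary term killed by $\lim_{s\to0^+}$ of the small coefficient. Your write-up is in fact a more detailed version of the paper's sketch, with the bookkeeping of the intermediate coefficients and the choice of $T'$ made explicit.
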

\begin{proof}It suffices to prove that for any $t_{1,k},t_{2,k}\in(0,T_1]$ with 
$t_{1,k}<t_{2,k}$:
\[f(t_{2,k}-t_{1,k})\|\mathbb{E}(z(t_{2,k},\cdot;\alpha;z_{0}))\|^2-f(q(t_{2,k}-t_{1,k})) \|\mathbb{E}(z(t_{1,k},\cdot;\alpha;z_{0}))\|^2
\leq \int_{t_{1,k}}^{t_{2,k}} \int_{G_0}|\mathbb{E}(z(\tau,x;\alpha;z_{0}))|^2dxd\tau,
\]
for  $q=1-\frac{\gamma}{\gamma_2}$ and $T_1$ small enough. Afterwards, we just have to apply a telescopic series with $t_{2,k}=T_1 q^k$ and $t_{1,k}=T_1 q^{k+1}$, noting that $f(q(t_{2,k}-t_{1,k})) \to 0$ as $k \to \infty$. 

For that, it suffices to obtain that:
\[g(s)\leq f(qs),\]
for $s>0$ small enough.
For that, it suffices to show that: 
\[\frac{g_0}{f_0}\exp\left(\frac{-2}{(\gamma_1s)^\beta}
+\frac{2}{(\gamma_2qs)^\beta}\right)\leq 1
\]
which clearly holds as $q>\frac{\gamma_1}{\gamma_2}$ and as $s$ is small enough.
\end{proof}

Firstly, let us show the decay of the solutions: 
\begin{lemma}\label{rk:decreasenorm}
	Let $G\subset\mathbb R^d$ be a domain, and
    $\alpha$ be a random variable satisfying 
	Hypothesis \HypH. Then,
	\begin{itemize}
		\item $|\vp_\alpha|$ is a strictly decreasing function in $[0,\infty)$.
		\item $t\mapsto \mathcal \| \mathbb{E}(z(t,\cdot;\alpha;\mathcal P_\lambda z_{0}))\|$, $t\mapsto \| \mathbb{E}(z(t,\cdot;\alpha;\mathcal P_\lambda^\perp z_{0}))\|$,
        and $t\mapsto \mathcal \|\mathbb{E}(z(t,\cdot;\alpha;z_{0}))\|$ are decreasing functions in $[0,T]$ for all $\lambda\geq \lambda_0$ and $z_{0}\in \mathcal{H}$. We recall that $\lambda_0>0$ is the first eigenvalue of $-\Delta$ under Dirichlet boundary condition.
	\end{itemize} 
\end{lemma}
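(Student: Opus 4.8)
The plan is to dispatch the two bullet points in order: first establish the monotonicity of $|\varphi_\alpha|$ on $[0,\infty)$ directly from Hypothesis \HypH, and then transfer this to the three solution norms through the Fourier expansion \eqref{eq:Fourdec} and Parseval's identity.

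For the first bullet the key idea is a rescaling that lifts the localized estimate \eqref{eq:hypSch}, a priori valid only for $t_1,t_2\in[0,T_0]$, into a comparison of $|\varphi_\alpha|$ at two arbitrary points of $[0,\infty)$. Given $0\le s_1<s_2<\infty$ (note $s_2>0$ necessarily), I would set
\[
\lambda=\frac{s_2}{T_0},\qquad t_2=T_0,\qquad t_1=\frac{s_1 T_0}{s_2},
\]
so that $\lambda>0$, $0\le t_1<t_2=T_0$, and $\lambda t_i=s_i$. Feeding these into \eqref{eq:hypSch} gives
\[
|\varphi_\alpha(s_2)|\le e^{-c\lambda^r(t_2-t_1)^\theta}\,|\varphi_\alpha(s_1)|.
\]
Since $\lambda>0$ and $t_2-t_1>0$, the exponential factor lies strictly in $(0,1)$, so $|\varphi_\alpha|$ is non-increasing on $[0,\infty)$ and strictly decreasing at every point where it is positive (in particular it strictly decreases away from the value $|\varphi_\alpha(0)|=1$).

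For the second bullet I would use \eqref{eq:Fourdec} with the orthonormality of $\{e_n\}$. For any $z_0\in\mathcal{H}$, Parseval yields
\[
\|\mathbb{E}(z(t,\cdot;\alpha;z_0))\|^2=\sum_{n\in\mathbb{N}}|\langle z_0,e_n\rangle|^2\,|\varphi_\alpha(\lambda_n t)|^2,
\]
together with the analogous identities obtained by restricting the sum to $n\in\Lambda_\lambda$ (for $\mathcal{P}_\lambda z_0$) and to $n\notin\Lambda_\lambda$ (for $\mathcal{P}_\lambda^\perp z_0$). Because every eigenvalue satisfies $\lambda_n\ge\lambda_0>0$, the map $t\mapsto\lambda_n t$ is increasing on $[0,T]$, and composing it with the non-increasing function $|\varphi_\alpha|$ from the first bullet shows that each term $t\mapsto|\varphi_\alpha(\lambda_n t)|^2$ is non-increasing. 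A sum of nonnegative non-increasing functions is non-increasing, so each of the three squared norms, and hence each of the three norms, is decreasing in $t$ on $[0,T]$.

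I expect the only delicate point to be the strictness asserted in the first bullet: the rescaling gives strict decrease only where $|\varphi_\alpha|>0$, so a fully rigorous ``strictly decreasing on $[0,\infty)$'' needs the non-vanishing of $\varphi_\alpha$, which I would verify separately (it is immediate for every example in Section \ref{Sec:HYP}, where $|\varphi_\alpha(s)|=e^{-c|s|^r}>0$). I would emphasize that the property actually invoked downstream — e.g.\ in the telescoping argument of Lemma \ref{lm:ineqprac} — is only the non-increasing character of the norms, which the computation above delivers unconditionally; thus the rescaling insight of the first bullet, rather than the second bullet's routine Parseval estimate, is the real content here.
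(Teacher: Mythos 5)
Your proof is correct and takes essentially the same route as the paper's: the identical rescaling of Hypothesis \HypH (the paper picks $\lambda$ large and sets $t_i=s_i/\lambda$; you fix $t_2=T_0$, $\lambda=s_2/T_0$ --- the same computation), followed by Parseval's identity applied to \eqref{eq:Fourdec} for the three norms.

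The one place you diverge is the strictness caveat, and your caution there is not pedantry but necessary. The paper's own proof writes $e^{-c\lambda^r(t_2-t_1)^{\theta}}|\vp_\alpha(\lambda t_1)|<|\vp_\alpha(\lambda t_1)|$, which silently assumes $|\vp_\alpha(\lambda t_1)|>0$ --- exactly the non-vanishing you flag. Moreover, non-vanishing does \emph{not} follow from Hypothesis \HypH: the P\'olya triangular characteristic function $\vp_\alpha(s)=\max(1-|s|,0)$ (the \textbf{CF} of the absolutely continuous density $(1-\cos x)/(\pi x^{2})$) satisfies \eqref{eq:hypSch} with $c=r=\theta=1$, since for $\lambda t_2<1$ one has $\log\frac{1-\lambda t_2}{1-\lambda t_1}=-\int_{\lambda t_1}^{\lambda t_2}\frac{du}{1-u}\leq-\lambda(t_2-t_1)$, while the inequality is trivial when $\lambda t_2\geq 1$; yet this $\vp_\alpha$ vanishes identically on $[1,\infty)$, so it is not strictly decreasing there. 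Hence the first bullet is only true in the form you actually prove: $|\vp_\alpha|$ is non-increasing on $[0,\infty)$ and strictly decreasing wherever it is positive. As you correctly observe, this weaker statement is all that is invoked later (the monotonicity and boundedness of the norms in \eqref{eq:decayut2}, \eqref{eq:t2andt1} and in the telescoping of Lemma \ref{lm:ineqprac}), so nothing downstream is affected.
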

\begin{proof}
	Let $s_1,s_2\in[0,\infty)$ with $s_1<s_2$ and $\lambda$ large enough so that $\frac{s_2}{\lambda}\leq T_0$. Then, if $t_1=\frac{s_1}{\lambda}$ and $t_2=\frac{s_2}{\lambda}$, we have from \eqref{eq:hypSch} that:
	\[|\vp_\alpha(s_2)|=|\vp_\alpha(\lambda t_2)|
	\leq e^{-c\lambda^r(t_2-t_1)^{\theta}}|\vp_\alpha(\lambda t_1)|
	<|\vp_\alpha(\lambda t_1)|=|\vp_\alpha(s_1)|.
	\] 
    Thus, $|\vp_\alpha|$ is strictly decreasing in $[0,\infty)$.
    
	Let $\lambda\geq \lambda_0$ and $z_{0}\in \mathcal{H}$. Applying Parseval's identity to \eqref{eq:Fourdec}, we obtain:
	\begin{eqnarray*}
			&&\|\mathbb{E}(z(t,\cdot;\alpha;\mathcal P_\lambda z_{0}))\|^{2}=\sum_{n\in\mathbb \varLambda_\lambda}|\langle z_0,e_n\rangle|^{2}|\varphi_{\alpha}(\lambda_{n}t)|^{2},\\
            && \| \mathbb{E}(z(t,\cdot;\alpha;\mathcal P_\lambda^\perp z_{0}))\|^2=\sum_{n\in\mathbb N\setminus \varLambda_\lambda}|\langle z_0,e_n\rangle|^{2}|\varphi_{\alpha}(\lambda_{n}t)|^{2},\\
			&&\|\mathbb{E}(z(t,\cdot;\alpha;z_{0}))\|^{2}=\sum_{n\in\mathbb N}|\langle z_0,e_n\rangle|^{2}|\varphi_{\alpha}(\lambda_{n}t)|^{2}.
		\end{eqnarray*}
	Thus, the second point arises from the fact that $|\varphi_\alpha|$ is decreasing.  
\end{proof}

Let us now prove Theorem \ref{Main result}.
\begin{proof}[Proof of Theorem \ref{Main result}]
	Let $t_1,t_2\in (0,T_0]$ such that $t_1<t_2$, $z_0 \in \mathcal{H}$ and $\lambda\geq\lambda_0$.
	We are going to prove:
	\begin{equation} \label{eq:interpineq}
		\begin{split}
			&C^{-1}e^{-C((t_2-t_1)^{-\frac{\theta}{2r-1}} +\sqrt{\lambda})}
			\|\mathbb{E}(z(t_2,\cdot;\alpha;z_{0}))\|^2
			-  e^{-\frac{c}{2^\theta}\lambda^r(t_2-t_1)^{\theta}}
			\|\mathbb{E}(z(t_1,\cdot;\alpha;z_{0}))\|^2\\
			&\leq 
			\int_{t_1}^{t_2}\int_{G_0}
			|\mathbb{E}(z(\tau, x;\alpha;z_{0}))|^2 dxd\tau,
		\end{split}
	\end{equation}
	where $C>0$ is large enough and $c>0$ as in \HypH, and then use Lemma \ref{lm:ineqprac}
	with the appropriate value of $\lambda$ (depending this 
	value on $t_1$ and $t_2$). 
	First, considering the orthogonal decomposition $\cal P_\lambda z_{0}\perp \cal P_\lambda^\perp z_{0}$ of $z_0$ induced by the Laplacian
    and Lemma \ref{rk:decreasenorm}	we have that:
	\begin{equation}\label{eq:decayut2}
    \begin{split}
		\|\mathbb{E}(z(t_2,\cdot;\alpha;z_{0}))\|^2
		&=\|\mathbb{E}(z(t_2,\cdot;\alpha;\mathcal P_\lambda z_{0}))\|^2+
        \|\mathbb{E}(z(t_2,\cdot;\alpha;\mathcal P_\lambda^\perp z_{0}))\|^2
        \\&\leq \frac{2}{t_2-t_1}\int_{\frac{t_1+t_2}{2}}^{t_2}\int_G
		(|\mathbb{E}(z(\tau, x;\alpha;\mathcal P_\lambda z_{0}))|^2+ 
		|\mathbb{E}(z(\tau, x;\alpha;\mathcal P^\perp_\lambda z_{0}))|^2)dxd\tau.
	\end{split}
    \end{equation}
	From Lemma \ref{lm:obsellG0}
	and that $\cal P_\lambda z_{0}=z_{0}-\cal P^\perp_\lambda z_{0}$ 
	we obtain that: 
	\begin{equation} \label{eq:localfull}
		\begin{split}
			\frac{2}{t_2-t_1}\int_{\frac{t_1+t_2}{2}}^{t_2}\int_G
			|\mathbb{E}(z(\tau, x;\alpha; \cal P_\lambda z_{0}))|^2 dxd\tau &\leq C
			\frac{e^{C\sqrt{\lambda}}}{t_2-t_1} \int_{\frac{t_1+t_2}{2}}^{t_2}\int_{G_0}
			|\mathbb{E}(z(\tau, x;\alpha; \cal P_\lambda z_0))|^2dxd\tau\\
			&\leq  \frac{Ce^{C\sqrt{\lambda}}}{t_2-t_1} 
			\int_{\frac{t_1+t_2}{2}}^{t_2}\int_{G_0}
			|\mathbb{E}(z(\tau, x;\alpha; z_{0}))|^2dxd\tau \\
			&+\frac{Ce^{C\sqrt{\lambda}}}{t_2-t_1} \int_{\frac{t_1+t_2}{2}}^{t_2}\int_G
			|\mathbb{E}(z(\tau, x;\alpha;\mathcal P_\lambda^\perp z_0))|^2 dxd\tau.
		\end{split}
	\end{equation}
	Moreover, from  the decay property 
	of Lemma \ref{rk:decreasenorm}, the spectral decomposition of the solution, and \eqref{eq:hypSch}
	we have that:
	\begin{equation} \label{eq:t2andt1}
		\begin{split}
			\frac{Ce^{C\sqrt{\lambda}}}{t_2-t_1}\int_{\frac{t_1+t_2}{2}}^{t_2}\int_G 
			|\mathbb{E}(z(\tau, x;\alpha; \cal P_\lambda^\perp z_{0}))|^2dxd\tau&\leq 
			Ce^{C\sqrt{\lambda}} \left\|\mathbb{E}(z((t_2+t_1)/2, \cdot ;\alpha;\cal P^\perp_{\lambda} z_{0}))\right\|^2\\
			&\leq Ce^{C\sqrt{\lambda}-\frac{c}{2^\theta}\lambda^r(t_2-t_1)^{\theta}}
			\|\mathbb{E}(z(t_1, \cdot ;\alpha;\cal P^\perp_{\lambda} z_{0}))\|^2.
		\end{split}
	\end{equation}
	Since $r>\frac{1}{2}$ and $\theta>0$, for 
    \begin{equation}\label{eq:defsigma}
    \sigma := \frac{\theta}{2r-1}>0,
    \end{equation}
    we obtain $(t_2-t_1)^{-1}\leq Ce^{C(t_2-t_1)^{-\sigma}}$.
	Thus, from \eqref{eq:decayut2}-\eqref{eq:t2andt1},  
	$2\leq C$
	and 
	$(t_2-t_1)^{-1}\leq Ce^{C(t_2-t_1)^{-\sigma}}$ (recall that $C>0$ is a sufficiently large constant) we obtain:
	\begin{equation*}
		\begin{split}
			\|\mathbb{E}(z(t_2, \cdot ;\alpha; z_{0}))\|^2
			&\leq \frac{Ce^{C\sqrt{\lambda}}}{t_2-t_1} 
			\int_{\frac{t_1+t_2}{2}}^{t_2}\int_{G_0}
			|\mathbb{E}(z(\tau, x ;\alpha; z_{0}))|^2dxd\tau \\
			&+ 
			Ce^{C\sqrt{\lambda}-\frac{c}{2^\theta}\lambda^r(t_2-t_1)^{\theta}}
			\|\mathbb{E}(z(t_1, \cdot ;\alpha; \cal P^\perp_{\lambda} z_{0}))\|^2\\
			& \leq	Ce^{C((t_2-t_1)^{-\sigma}+\sqrt{\lambda})}\int_{\frac{t_1+t_2}{2}}^{t_2}\int_{G_0}
			|\mathbb{E}(z(\tau, x ;\alpha; z_{0}))|^2dxd\tau \\
			&+ 
			Ce^{C\sqrt{\lambda}-\frac{c}{2^\theta}\lambda^r(t_2-t_1)^{\theta}}
			\|\mathbb{E}(z(t_1, \cdot ;\alpha; \cal P^\perp_{\lambda} z_{0}))\|^2.
		\end{split}
	\end{equation*}
        Now, multiplying the latter inequality by $C^{-1}e^{-C((t_2-t_1)^{-\sigma} +\sqrt{\lambda})}$ and using $e^{-C(t_2-t_1)^{-\sigma}}<1$, we obtain
	\begin{equation} \label{eq:obsalmost}
		\begin{split}
			&C^{-1}e^{-C((t_2-t_1)^{-\sigma} +\sqrt{\lambda})}\|
			\mathbb{E}(z(t_2,\cdot; \alpha; z_0))\|^2\\
			&\leq 
			\int_{t_1}^{t_2}\int_{G_0} |\mathbb{E}(z(\tau,x;\alpha; z_0))|^2dxd\tau
			+ e^{-\frac{c}{2^\theta}\lambda^r(t_2-t_1)^{\theta}} \|\mathbb{E}(z(t_1,\cdot; \alpha;
			\cal P^\perp_{\lambda} z_0))\|^2\\
			&\leq 
			\int_{t_1}^{t_2}\int_{G_0} |\mathbb{E}(z(\tau,x;\alpha; z_0))|^2dxd\tau
			+ e^{-\frac{c}{2^\theta}\lambda^r(t_2-t_1)^{\theta}} \|\mathbb{E}(z(t_1,\cdot; \alpha;
			z_0))\|^2,
		\end{split}
	\end{equation}
	which implies \eqref{eq:interpineq}.
	\paragraph{}
	We now define:
	\begin{equation}\label{def:lambdat1t2}
		\lambda(t_2,t_1)=\frak C(t_2-t_1)^{-2\sigma}, 
	\end{equation} 
	for $\frak C\geq\lambda_0$ a positive constant sufficiently large.
	If we take in \eqref{eq:interpineq}
	$\lambda$ given by \eqref{def:lambdat1t2}, using \eqref{eq:defsigma}
	we obtain \eqref{eq:recriwthfg} for the functions:
	\begin{equation*}
		f(s)= C^{-1}\exp\left(-C\left(1+\frak C^{1/2}\right) s^{-\sigma}
		\right) \quad \text{and}\quad
		g(s)=\exp\left(-\frac{c}{2^\theta}\frak C^{r} s^{-2r\sigma+\theta}\right)=\exp\left(-\frac{c}{2^\theta}\frak C^{r} s^{-\sigma}\right).
	\end{equation*}
    Clearly, $\dis\lim_{s\to0^+}f(s)=0$
	In addition,  we have for $\sigma$ given in \eqref{eq:defsigma} and all $s\in(0,1)$  that:
	\[f(s)\geq C^{-1}\exp\left(-2C \frak C^{1/2}s^{-\sigma}
	\right)\quad \text{and}\quad g(s)\leq \exp\left(-\frac{c}{2^\theta}\frak C^{r} s^{-\sigma}\right).\]
	Moreover, since $r>\frac{1}{2}$, the functions $f$ and $g$ satisfy the hypotheses of Lemma \ref{lm:ineqprac} for $\beta=\sigma$,
	$\gamma_1= \left(\frac{c\frak C^r}{2^{1+\theta}}\right)^{-1/\beta}$ and 
	$\gamma_2= (C\frak C^{1/2})^{-1/\beta}$ by taking $\frak C$ large enough. Consequently, we end the proof using Lemma \ref{lm:ineqprac}.
\end{proof}

\section{Lack of controllability} \label{Sec4}
In this section, we present other contributions to the literature
regarding the lack of controllability
of the system \eqref{sys:Schr} for absolutely continuous random variables, as well as the lack of simultaneous null controllability.
\subsection{Lack of exact averaged controllability}
\paragraph{}
In contrast to the literature, a general result can be derived for the lack of exact controllability in $L^{2}(G)$ with controls in $L^2(0,T;L^2(G_0))$ for absolutely continuous random variables. This includes, in particular, uniform, exponential, Laplace, and chi-squared random variables. However, it should be noted that for these random variables, exact controllability has been proven in \cite[Theorem 4.3]{lu2016averaged}, either when the control is in $L^2(0,T;H^{-2}(G_0))$ or within spaces of the type $H^{2k}(G)\cap H_0^1(G)$ for some $k\geq1$. In particular, we show that $L^2$ controllability and observability properties (such as those proved in \cite{machtyngier1994exact} and \cite{phung2001observability}) of the Schrödinger equation are not inherited when we consider its average. 
\par 
Let us recall the Riemann-Lebesgue lemma, whose proof is in  \cite[Section 7.1]{strichartz1994guide}
\begin{lemma}[Riemann-Lebesgue lemma]\label{lm:RL}
	Let $f\in L^1(\mathbb{R})$. Then,
	\[\lim_{z\to\infty}\int_{-\infty}^{\infty}e^{-\mathrm{i}zs} f(s)ds=
	\lim_{z\to-\infty}\int_{-\infty}^{\infty} e^{-\mathrm{i}zs} f(s)ds=0.
	\]
\end{lemma}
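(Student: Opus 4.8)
The plan is to prove the statement by a density argument, transferring the decay from an explicitly computable dense subclass of $L^1(\mathbb R)$ to the whole space. The key preliminary observation is that the linear map $f \mapsto \widehat{f}(z) := \int_{-\infty}^{\infty} e^{-\mathrm{i}zs} f(s)\,ds$ is uniformly bounded in $z$: since $|e^{-\mathrm{i}zs}| = 1$, one has $|\widehat{f}(z)| \le \|f\|_{L^1(\mathbb R)}$ for every $z \in \mathbb R$. This bound is what will allow the limit behaviour of a good approximation of $f$ to be inherited by $f$ itself.

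First I would verify the conclusion directly on step functions, that is, finite linear combinations of indicator functions $\mathds{1}_{[a,b]}$ of bounded intervals. For a single interval and $z \ne 0$ an elementary integration gives
\[
\int_a^b e^{-\mathrm{i}zs}\,ds = \frac{e^{-\mathrm{i}za} - e^{-\mathrm{i}zb}}{\mathrm{i}z},
\]
whose modulus is at most $2/|z|$ and hence tends to $0$ as $|z| \to \infty$; by linearity the same holds for any step function. Next I would invoke the density of step functions in $L^1(\mathbb R)$: given $f \in L^1(\mathbb R)$ and $\varepsilon > 0$, choose a step function $g$ with $\|f - g\|_{L^1(\mathbb R)} < \varepsilon$. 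Then for all $z$ the uniform bound yields $|\widehat{f}(z)| \le |\widehat{g}(z)| + \|f-g\|_{L^1(\mathbb R)} < |\widehat{g}(z)| + \varepsilon$, and since $\widehat{g}(z) \to 0$ as $|z| \to \infty$ there is $Z>0$ with $|\widehat{g}(z)| < \varepsilon$ for $|z| \ge Z$, so $\limsup_{|z|\to\infty} |\widehat{f}(z)| \le 2\varepsilon$. As $\varepsilon$ is arbitrary, both one-sided limits vanish.

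A cleaner self-contained alternative avoids step functions altogether: the substitution $s \mapsto s + \pi/z$ gives $\widehat{f}(z) = -\int_{-\infty}^{\infty} e^{-\mathrm{i}zs} f(s + \pi/z)\,ds$, so that $2\widehat{f}(z) = \int_{-\infty}^{\infty} e^{-\mathrm{i}zs}\bigl(f(s) - f(s+\pi/z)\bigr)\,ds$ and therefore $|\widehat{f}(z)| \le \tfrac12 \|f - f(\cdot + \pi/z)\|_{L^1(\mathbb R)}$; the right-hand side tends to $0$ by continuity of translation in $L^1(\mathbb R)$. The main point in either route is not an obstacle so much as a single standard measure-theoretic input: the density of step functions in $L^1(\mathbb R)$ (equivalently, the $L^1$-continuity of translations), which rests on the approximation of integrable functions by simple functions supported on sets of finite measure together with the regularity of Lebesgue measure. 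Since this is exactly the ingredient established in \cite[Section 7.1]{strichartz1994guide}, the density transfer in the second paragraph is the only step requiring care, and everything else is elementary.
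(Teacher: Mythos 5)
The paper does not actually prove this lemma: it is stated as a known result and attributed to \cite[Section 7.1]{strichartz1994guide}, so there is no in-paper argument to compare against. Your proposal is correct on its own terms, and both routes you sketch are complete proofs. In the first, the chain of ingredients is sound: the uniform bound $|\widehat f(z)|\le\|f\|_{L^1(\mathbb R)}$, the explicit computation $\int_a^b e^{-\mathrm{i}zs}\,ds=\bigl(e^{-\mathrm{i}za}-e^{-\mathrm{i}zb}\bigr)/(\mathrm{i}z)$ with modulus at most $2/|z|$, linearity, and the density of step functions in $L^1(\mathbb R)$ give exactly the standard $\varepsilon$-transfer argument (your $\limsup$ bound of $2\varepsilon$ rather than $\varepsilon$ is immaterial). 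In the second, the substitution $s\mapsto s+\pi/z$ is valid for $z\ne 0$, the factor $e^{-\mathrm{i}\pi}=-1$ produces the sign, and the estimate $|\widehat f(z)|\le\tfrac12\|f-f(\cdot+\pi/z)\|_{L^1(\mathbb R)}$ together with $L^1$-continuity of translations finishes the proof; as you note, the two measure-theoretic inputs (density of step functions, continuity of translations) are essentially interchangeable, each being the standard route to the other. Since the paper's treatment of this statement is only a citation, your self-contained argument supplies strictly more than the paper does, and either of your two versions would serve as a valid replacement for the citation.
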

The following result is a contribution to the literature.
\begin{theorem} \label{tm:noexcon}
	Let $\alpha$ be an absolutely continuous random variable in $(\Omega, \mathcal{F}, \mathbb{P})$, $G\subset\mathbb R^d$ be a Lipschitz domain, and $G_0\subset G$ be a subset of strictly positive measure. Then,  system \eqref{sys:Schr} is not exactly controllable in average in the space $L^2(G)$ with controls acting in $L^2(0,T;L^2(G_0))$.
\end{theorem}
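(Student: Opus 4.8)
The plan is to argue by contradiction through the observability dual of exact controllability. By Proposition \ref{prop:exavcont}, the system \eqref{sys:Schr} is exactly controllable in average if and only if the adjoint system \eqref{sys:adjSchr} is exactly observable in average; equivalently, in the time-reversed formulation \eqref{est:obsexact}, there would exist $C>0$ such that $\|z_0\|\le C\,\|\mathbb{E}(z(\cdot,\cdot;\alpha;z_0))\|_{L^2((0,T)\times G_0)}$ for every $z_0\in\mathcal H$. The idea is to test this inequality against the high-frequency eigenfunctions $e_n$ and show that the right-hand side tends to $0$ while the left-hand side stays equal to $1$, producing a contradiction.

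Concretely, I would first insert $z_0=e_n$ into the Fourier decomposition \eqref{eq:Fourdec}, which collapses to $\mathbb{E}(z(t,\cdot;\alpha;e_n))=\varphi_\alpha(\lambda_n t)\,e_n$. Hence $\|\mathbb{E}(z(\cdot,\cdot;\alpha;e_n))\|_{L^2((0,T)\times G_0)}^2=\|e_n\|_{L^2(G_0)}^2\int_0^T|\varphi_\alpha(\lambda_n t)|^2\,dt\le\int_0^T|\varphi_\alpha(\lambda_n t)|^2\,dt$, where I bound $\|e_n\|_{L^2(G_0)}\le\|e_n\|_{L^2(G)}=1$. Since $\|e_n\|=1$, the putative observability inequality forces $C^{-2}\le\int_0^T|\varphi_\alpha(\lambda_n t)|^2\,dt$ for all $n$. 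A change of variables $s=\lambda_n t$ rewrites this as $C^{-2}\le\lambda_n^{-1}\int_0^{\lambda_n T}|\varphi_\alpha(s)|^2\,ds$, so it suffices to prove that $\lambda_n^{-1}\int_0^{\lambda_n T}|\varphi_\alpha(s)|^2\,ds\to 0$ as $n\to\infty$ (recall $\lambda_n\to\infty$).

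This is where absolute continuity enters. Writing $\varphi_\alpha$ as the Fourier transform of the density $\rho_\alpha\in L^1(\mathbb R)$ and applying the Riemann--Lebesgue Lemma \ref{lm:RL} gives $|\varphi_\alpha(s)|\to 0$ as $|s|\to\infty$. Combined with the universal bound $|\varphi_\alpha|\le 1$, a Cesàro-averaging argument yields $M^{-1}\int_0^M|\varphi_\alpha(s)|^2\,ds\to 0$ as $M\to\infty$: given $\varepsilon>0$ choose $S$ with $|\varphi_\alpha(s)|^2<\varepsilon$ for $s>S$, split the integral at $S$, and bound the two contributions by $S/M$ and $\varepsilon$ respectively. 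Taking $M=\lambda_n T$ shows $\lambda_n^{-1}\int_0^{\lambda_n T}|\varphi_\alpha(s)|^2\,ds=T\cdot(\lambda_n T)^{-1}\int_0^{\lambda_n T}|\varphi_\alpha(s)|^2\,ds\to 0$, contradicting the uniform lower bound $C^{-2}$. Therefore no such $C$ exists and the system fails to be exactly controllable in average.

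I expect the main (and essentially only nontrivial) obstacle to be the passage from the pointwise decay of $|\varphi_\alpha|$ furnished by Riemann--Lebesgue to the decay of its time-averaged square, namely recognizing that the observation energy of a single mode is a Cesàro mean of $|\varphi_\alpha|^2$ over an expanding window and hence vanishes. Everything else, the duality reduction and the diagonalization on eigenfunctions, is routine; the role of absolute continuity is precisely to guarantee, via Lemma \ref{lm:RL}, that $|\varphi_\alpha|$ retains no mass at infinity.
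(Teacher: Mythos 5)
Your proposal is correct and follows essentially the same route as the paper: reduce by duality (Proposition \ref{prop:exavcont}) to the observability inequality \eqref{est:obsexact}, test it with the eigenfunctions $e_n$ so that the observation term collapses to $\int_0^T|\varphi_\alpha(\lambda_n t)|^2\,dt$, and show this tends to $0$ via the Riemann--Lebesgue lemma. The only difference is cosmetic and lies in the last step: the paper applies the Dominated Convergence Theorem to the pointwise limit $\varphi_\alpha(\lambda_n t)\to 0$ for each fixed $t$, whereas you rescale by $s=\lambda_n t$ and handle the resulting Ces\`aro mean by an explicit $\varepsilon$-splitting; both are valid.
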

\begin{proof}
	The proof is based on the Riemann-Lebesgue lemma (see Lemma \ref{lm:RL}). Let us suppose that there is $C>0$ such that
	the observability inequality \eqref{est:obsexact} holds and 
	let us obtain a contradiction. Consider as initial values $z_0$ the eigenfunction $e_n$ for $n\in\mathbb{N}$. In this case,
	\[\|e_n\|=1. 
	\]
	Then, assuming \eqref{est:obsexact}, for such initial value we obtain:
		\begin{align} 
			1 &\leq C\int_0^T\int_{G_0}  |\mathbb{E}(z(t,x;\alpha; e_n))|^2 dxdt \nonumber\\
			&\leq C\int_0^T\int_{G}  |\mathbb{E}(z(t,x;\alpha;e_n))|^2 dxdt \nonumber\\
			&=C\int_0^T\int_{G}  |\varphi_{\alpha}(\lambda_{n}t)e_n(x)|^2 dxdt  \nonumber\\
			&= C\int_0^T|\varphi_{\alpha}(\lambda_{n}t)|^2 dt. \label{est:eq}
		\end{align}
	Since $\alpha$ is an absolutely continuous random variable, then its \textbf{PDF} $\rho_{\alpha}\in L^{1}(\mathbb{R})$ and its \textbf{CF} is given by:
	\[\varphi_{\alpha}(\lambda_{n}t)=\int_{-\infty}^{\infty}e^{\mathrm{i}(\lambda_{n}t)\xi}\rho_{\alpha}(\xi)d\xi.\]
	Using the Riemann-Lebesgue lemma (see Lemma \ref{lm:RL}), for all $t\in (0,T]$, $\varphi_{\alpha}(\lambda_{n}t)$ converges to $0$ as $n\rightarrow \infty$. Moreover, $|\varphi_{\alpha}(\lambda_{n}t)|\leq 1$. Thus, the Dominated Convergence Theorem shows that the last term in \eqref{est:eq} converges to $0$ as $n\rightarrow \infty$. Therefore, an estimate such as \eqref{est:obsexact} is not possible, and the result follows from Proposition \ref{prop:exavcont}.
\end{proof}
As a consequence of Theorem \ref{tm:noexcon} and Proposition \ref{Pr:absolutely continuous}, we have the following corollary:
\begin{corollary}
	Let $\alpha$ be a random variable that satisfies Hypothesis \HypH,  $G\subset\mathbb R^d$ a Lipschitz domain, and $G_0\subset G$ a subset of strictly positive measure. Then, the system \eqref{sys:Schr} is not exactly controllable in average in the space $L^2(G)$ with controls acting in $L^2(0,T;L^2(G_0))$.
\end{corollary}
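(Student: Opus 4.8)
The plan is to obtain the statement by chaining together the two results that immediately precede it, with no genuinely new argument required. First I would invoke Proposition \ref{Pr:absolutely continuous}: since $\alpha$ satisfies Hypothesis \HypH, its distribution $\mu_\alpha$ is absolutely continuous with respect to the Lebesgue measure (in fact with a density $\rho_\alpha \in C^\infty(\mathbb{R})$, although mere absolute continuity is all that will be used downstream). This step upgrades the qualitative assumption on the characteristic function in \eqref{eq:hypSch} into the structural property ``$\alpha$ is absolutely continuous'' that Theorem \ref{tm:noexcon} takes as its standing hypothesis.

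Second, I would check that the remaining hypotheses line up verbatim: $G \subset \mathbb{R}^d$ is assumed Lipschitz and $G_0 \subset G$ has strictly positive measure, which are exactly the geometric assumptions under which Theorem \ref{tm:noexcon} is stated. Thus all hypotheses of Theorem \ref{tm:noexcon} are met, and applying it directly yields that \eqref{sys:Schr} fails to be exactly controllable in average in $L^2(G)$ with controls acting in $L^2(0,T;L^2(G_0))$, which is the desired conclusion. Concretely, the mechanism behind this failure is already supplied inside Theorem \ref{tm:noexcon}: testing the would-be observability estimate \eqref{est:obsexact} on the eigenfunctions $e_n$ reduces the problem to the decay of $\int_0^T |\varphi_\alpha(\lambda_n t)|^2\,dt$, which tends to $0$ by the Riemann--Lebesgue lemma (Lemma \ref{lm:RL}) together with dominated convergence, contradicting any uniform lower bound.

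I do not expect any obstacle here, since the corollary is logically strictly weaker than the combination of the two cited results; the only point worth flagging is that Hypothesis \HypH in fact delivers \emph{more} than is needed (a smooth density), whereas Theorem \ref{tm:noexcon} only requires absolute continuity, so the deduction is essentially a one-line specialization.
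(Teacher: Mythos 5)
Your proposal is correct and matches the paper's own reasoning exactly: the paper states this corollary as an immediate consequence of Proposition \ref{Pr:absolutely continuous} (Hypothesis \HypH implies absolute continuity of $\mu_\alpha$) combined with Theorem \ref{tm:noexcon}, which is precisely your two-step chaining. There is nothing to add; the deduction is, as you say, a one-line specialization.
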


\subsection{Lack of simultaneous null controllability}
Now, we will show that we do not have simultaneous null controllability for any absolutely continuous random variable. More precisely, the realizations of such a notion are negligible, which is an immediate consequence of the following theorem:
\begin{theorem}\label{thm:nonsimcon}
	Let $G\subset\mathbb R^d$ be a Lipschitz domain, $G_0\subset G$ be a non-empty subset of strictly positive measure, $T>0$, $y_0\in L^2(G)\setminus\{0\}$ and
	$u\in L^2((0,T)\times G_0)$. Then, the set:
	\begin{eqnarray*}
		\mathcal{A}\overset{\text{def}}{=}\left\{\xi\in\mathbb R\;:\; y(T,\cdot;\xi;y_0;u)=0\right\}.
	\end{eqnarray*}
	is finite.
\end{theorem}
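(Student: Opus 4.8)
The plan is to express $y(T,\cdot;\xi;y_0;u)$ via the Duhamel formula in the eigenbasis $\{e_n\}$ and to show that the equation $y(T,\cdot;\xi;y_0;u)=0$ forces a nontrivial entire-function condition on $\xi$, whose zero set must be discrete, hence (after confining $\xi$ to a bounded set) finite. Writing $y_0=\sum_n a_n e_n$ and $u(s,\cdot)=\sum_n u_n(s) e_n$ with $a_n=\langle y_0,e_n\rangle$ and $u_n(s)=\langle \mathds{1}_{G_0}u(s,\cdot),e_n\rangle$, the solution of $(\mathcal P_\xi)$ is
\[
y(T,\cdot;\xi;y_0;u)=\sum_{n\in\mathbb N}\left(e^{-\mathrm{i}\xi\lambda_n T}a_n+\int_0^T e^{-\mathrm{i}\xi\lambda_n(T-s)}u_n(s)\,ds\right)e_n.
\]
Thus $y(T,\cdot;\xi;y_0;u)=0$ if and only if, for every $n\in\mathbb N$,
\[
F_n(\xi):=e^{-\mathrm{i}\xi\lambda_n T}a_n+\int_0^T e^{-\mathrm{i}\xi\lambda_n(T-s)}u_n(s)\,ds=0.
\]
Because $y_0\neq0$, there is at least one index $m$ with $a_m\neq0$. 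The strategy is to use that single coefficient $F_m$: I claim $\mathcal A\subset\{\xi:F_m(\xi)=0\}$, and that the latter is finite.

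The core observation is that $F_m$ is an entire function of $\xi$ that is \emph{almost periodic} in the sense that $|F_m(\xi)|$ cannot go to zero along any sequence without $F_m$ being identically zero, so I would argue as follows. Factor out the phase: set
\[
G_m(\xi):=e^{\mathrm{i}\xi\lambda_m T}F_m(\xi)=a_m+\int_0^T e^{\mathrm{i}\xi\lambda_m s}u_m(s)\,ds,
\]
which has exactly the same zeros as $F_m$. Now $G_m$ is entire in $\xi$ (the integral converges for all complex $\xi$ since $u_m\in L^2(0,T)$ and the exponential is bounded on compacts), and it is not identically zero: indeed, applying the Riemann–Lebesgue lemma (Lemma \ref{lm:RL}) to $s\mapsto \mathds 1_{(0,T)}(s)u_m(s)\in L^1(0,T)$, the integral term tends to $0$ as the real variable $\xi\lambda_m\to+\infty$, whence $G_m(\xi)\to a_m\neq0$. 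Therefore $G_m$ is a nonconstant (in particular nonzero) entire function, so its zero set has no accumulation point in $\mathbb C$, and in particular $\mathcal A\subset\{\xi\in\mathbb R:G_m(\xi)=0\}$ has no finite accumulation point in $\mathbb R$.

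It remains to rule out an accumulation of zeros at infinity, which the Riemann–Lebesgue step already handles: since $G_m(\xi)\to a_m\neq0$ as $\xi\to+\infty$ (and symmetrically as $\xi\to-\infty$, using $|\varphi|$-type bounds or the same lemma on the conjugate phase), there is $R>0$ with $|G_m(\xi)|\ge|a_m|/2>0$ for $|\xi|\ge R$. Hence every zero of $G_m$, and a fortiori every point of $\mathcal A$, lies in the compact interval $[-R,R]$; combined with the absence of finite accumulation points this forces $\mathcal A$ to be finite. The main obstacle I anticipate is the behavior as $\xi\to-\infty$: the Riemann–Lebesgue lemma as stated (Lemma \ref{lm:RL}) already covers both $\pm\infty$, so the only care needed is to confirm that $u_m$ is genuinely in $L^1(0,T)$ (immediate from $u_m\in L^2(0,T)$ on a bounded interval) and that the chosen index $m$ with $a_m\neq0$ exists (immediate from $y_0\neq0$).
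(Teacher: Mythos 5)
Your proof is correct and follows essentially the same route as the paper's: expand in the Dirichlet eigenbasis, reduce to a single mode $m$ with $\langle y_0,e_m\rangle\neq 0$, and combine the Riemann--Lebesgue lemma (to confine the zeros to a compact interval) with analyticity of the truncated Fourier transform (to exclude accumulation points of zeros). The only difference is organizational: you use the Riemann--Lebesgue limit $G_m(\xi)\to a_m\neq 0$ to see directly that the entire function is not identically zero, whereas the paper argues by contradiction in two cases (unbounded vs.\ bounded zero set) and rules out constancy of $\phi_n$ via the observation that the Fourier transform of a non-null $L^1$ function cannot be constant --- your bookkeeping is slightly cleaner, but the ingredients are identical.
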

In the proof of this theorem, we will use the following result stated in \cite[Theorem 7.2.1]{strichartz1994guide}:
\begin{lemma}[Paley-Wiener Theorem]\label{lm:analycond}
	Let $T>0$ and $f\in L^2(0,T)$. Then 
	\[z\mapsto \int_0^Te^{-\mathrm{i}zs} f(s)ds
	\]
	is an analytic function.
\end{lemma}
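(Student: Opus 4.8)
The plan is to show that the function
\[
F(z) \overset{\text{def}}{=} \int_0^T e^{-\mathrm{i}zs} f(s)\,ds
\]
is in fact \emph{entire} on $\mathbb{C}$, which in particular yields the stated analyticity. First I would record that, since $(0,T)$ has finite measure, the Cauchy--Schwarz inequality gives $f\in L^1(0,T)$ with $\int_0^T|f(s)|\,ds\leq \sqrt{T}\,\|f\|_{L^2(0,T)}$. For any fixed $z\in\mathbb{C}$ and $s\in[0,T]$ one has $|e^{-\mathrm{i}zs}|=e^{s\,\mathrm{Im}(z)}\leq e^{T|\mathrm{Im}(z)|}$, so the integrand is bounded by $e^{T|\mathrm{Im}(z)|}|f(s)|\in L^1(0,T)$ and $F(z)$ is well defined for every $z\in\mathbb{C}$.

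To establish holomorphy, I would differentiate under the integral sign. Fix a compact set $K\subset\mathbb{C}$ and set $M_K\overset{\text{def}}{=}\sup_{z\in K}|\mathrm{Im}(z)|<\infty$. For $z\in K$, the partial derivative of the integrand with respect to $z$ is $-\mathrm{i}s\,e^{-\mathrm{i}zs}f(s)$, whose modulus is bounded by $T\,e^{TM_K}|f(s)|$, uniformly in $z\in K$ and integrable in $s$. This uniform $L^1$ domination justifies interchanging differentiation and integration, so $F$ is complex differentiable on the interior of $K$ with
\[
F'(z)=-\mathrm{i}\int_0^T s\,e^{-\mathrm{i}zs}f(s)\,ds.
\]
Since $K$ is arbitrary, $F$ is holomorphic on all of $\mathbb{C}$, hence entire and \emph{a fortiori} analytic.

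The only genuine technical point is the interchange in the previous step; equivalently, one may invoke Morera's theorem. Indeed, $F$ is continuous by dominated convergence, and for any closed triangle $\partial\Delta\subset\mathbb{C}$ the estimate $|e^{-\mathrm{i}zs}f(s)|\leq e^{TM_{\Delta}}|f(s)|$ (with $M_{\Delta}$ the maximum of $|\mathrm{Im}(z)|$ over $\Delta$) legitimizes Fubini's theorem, giving
\[
\int_{\partial\Delta}F(z)\,dz=\int_0^T f(s)\left(\int_{\partial\Delta}e^{-\mathrm{i}zs}\,dz\right)ds=0,
\]
because $z\mapsto e^{-\mathrm{i}zs}$ is entire for each fixed $s$. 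Morera's theorem then delivers analyticity. I expect no serious obstacle here: the result is standard, and the sole care needed is the uniform integrable bound on compact sets that underpins either the differentiation-under-the-integral argument or the Fubini swap.
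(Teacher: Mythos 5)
Your proof is correct; note, however, that the paper itself offers no proof of this lemma at all---it is quoted verbatim from \cite[Theorem 7.2.1]{strichartz1994guide}---so your argument is a self-contained substitute for a citation rather than a variant of an in-paper proof. What you give is the standard textbook derivation, and it actually yields slightly more than the statement asks: $F(z)=\int_0^T e^{-\mathrm{i}zs}f(s)\,ds$ is entire (indeed of exponential type at most $T$, via your bound $|e^{-\mathrm{i}zs}|\leq e^{T|\operatorname{Im}(z)|}$), whereas the lemma only needs analyticity on $\mathbb{R}$, which suffices for the accumulation-of-zeros argument in Theorem \ref{thm:nonsimcon}. One technical caveat, which you yourself flagged: the usual real-parameter theorem on differentiation under the integral sign does not immediately deliver \emph{complex} differentiability, so the first half of your argument needs either a difference-quotient domination (e.g.\ writing $e^{-\mathrm{i}(z+h)s}-e^{-\mathrm{i}zs}=-\mathrm{i}hs\int_0^1 e^{-\mathrm{i}(z+th)s}\,dt$, which gives the uniform integrable bound $|h|\,T\,e^{T(M_K+1)}|f(s)|$ for $|h|\leq 1$ and then dominated convergence along arbitrary sequences $h_n\to 0$) or the Morera route. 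Since you supplied the Morera--Fubini argument in full---continuity of $F$ by dominated convergence, Fubini on $\partial\Delta\times(0,T)$ justified by the compact bound, and $\int_{\partial\Delta}e^{-\mathrm{i}zs}\,dz=0$ by Goursat---the proof stands with no genuine gap; it is also marginally more elementary than invoking the full Paley--Wiener theorem, which additionally characterizes the image space and is more than the paper ever uses.
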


\begin{proof}[Proof of Theorem \ref{thm:nonsimcon}]
	Using Duhamel formula, we have that:
	\begin{eqnarray*}
		\mathcal{A}=\left\{ \xi\in\mathbb R\;:\; \int_0^Te^{-\mathrm{i}s\xi\Delta}\mathds{1}_{G_0}u(s,\cdot)ds=-y_0\right\}. 
	\end{eqnarray*}
	Developing $e^{-\mathrm{i}s\xi\Delta}\mathds{1}_{G_0}u(s,\cdot)$ as a Fourier series, we have that:
	\begin{equation*}
		\int_0^Te^{-\mathrm{i}s\xi\Delta}\mathds{1}_{G_0}u(s,\cdot)ds
		=\sum_{n\in\mathbb{N}} \left[\left(\int_0^Te^{-\mathrm{i}\lambda_n s\xi}
		f_n(s) ds \right) e_n \right],
	\end{equation*}
	for
	\[f_n(s)=\langle\mathds{1}_{G_0}u(s,\cdot),e_n \rangle.
	\]
	It follows that:
	\begin{eqnarray*}
		\mathcal{A}=\bigcap_{n\in\mathbb{N}}\left\{ \xi\in\mathbb R\;:\; \int_0^Te^{-\mathrm{i}\lambda_n s\xi}
		f_n(s) ds=-\langle y_0, e_n\rangle \right\} \overset{\text{def}}{=} \bigcap_{n\in\mathbb{N}}\mathcal{A}_{n}. 
	\end{eqnarray*}
	So, it is enough to prove that there exists $n\in\mathbb{N}$ such that $\mathcal{A}_{n}$ is finite.
	Let us pick $n$ such that 
    \[\langle y_0,e_n\rangle\neq0,\]
    which implies that $f_n(s)\neq0$. 
	Since $y_0\neq0$ such $n$ exists. Now, let us suppose for the sake of contradiction that $\mathcal A_n$ is infinite. Then, we have one of the following cases:
	\begin{itemize}
		\item $\mathcal A_n$ is unbounded. In that case, there is a sequence $\left\{\xi_m\right\}\subset\mathcal A_n$ such that either $\xi_m\to\infty$ or $\xi_m\to-\infty$. In that case, as $f_n\in L^2(0,T)\subset L^1(0,T)$, by the Riemann-Lebesgue lemma (see Lemma \ref{lm:RL}):
		\[\lim_{m\to\infty} \int_0^Te^{-\mathrm{i}\lambda_n s\xi_m}
		f_n(s) ds =0,\]
		which contradicts \[\int_0^Te^{-\mathrm{i}\lambda_n s\xi_m}
		f_n(s) ds=-\langle y_0, e_n\rangle \ \ \forall m\in\mathbb N.\]Thus, this case is not possible.
		\item $\mathcal A_n$ is bounded. By continuity of $\xi\mapsto \displaystyle\int_0^Te^{-\mathrm{i}\lambda_n s\xi}
		f_n(s) ds$ we can prove that $\mathcal A_n$ is also close. 
		Then, by Bolzano-Weirestrass, there is a sequence $\left\{\xi_m \right\}\subset \mathcal A_n$ of distinct values and $\tilde\xi\in \mathcal A_n$ such that $\xi_m\to \tilde\xi$ and $\xi_m\neq \tilde\xi$. However, note that 
		$$\phi_n(\xi) \overset{\text{def}}{=} \int_0^Te^{-\mathrm{i}\lambda_n s\xi}f_n(s) ds$$
		is analytic in $\mathbb R$, which follows from Lemma \ref{lm:analycond}.
		Then, $\phi_n + \langle y_0, e_n\rangle$ has $\tilde\xi$ as an accumulation point of its zeros, and from there one can obtain that the Taylor series around $\tilde \xi$ is constant (the first non-constant term of the series would prevent the accumulation point), and thus $\phi_n$ would be constant. But $\phi_n$ is essentially the Fourier transform of a non-null function (of $\mathds 1_{(0,T)}f_n$)
		in $L^1(\mathbb R)$, so it is absurd that $\phi_n$ is constant (the inverse Fourier transform of constant functions are Dirac masses). Thus, this case is also not possible. 
	\end{itemize}
	In conclusion, it is absurd that $\mathcal A_n$ is infinite, so it must be finite.
\end{proof}
\section{Some numerical results and experiments} \label{Sec5}
The aim of this section is to illustrate numerically the averaged null controllability of system \eqref{sys:Schr}.
\subsection{Algorithm for calculating HUM controls}
In this section, we look at a numerical algorithm to calculate HUM controls, which provides a control with minimal $L^2$-norm. 
We refer to \cite{boyer2013penalised, glowinskiexact,lazar2022control} for more details on this method for parabolic equations and other systems, such as the wave equation and the Stokes system. In \cite{ boutaayamou2025null}, this method is applied to a heat equation coupled with an ordinary differential equation. In the following, we will adapt this method to the Schrödinger equation \eqref{sys:Schr} in the average sense. For numerical results on average controllability and ensemble controllability of finite-dimensional systems, we refer the reader to \cite{lazar2022control}.
\par 
Let $y_{0}\in \mathcal{H}$ be an initial datum to be controlled and we define the cost functional by
\begin{eqnarray*}
	\mathcal{J}(z_{T}) \overset{\text{def}}{=} \frac{1}{2}\int_{0}^{T}\int_{G_{0}}\left|\mathbb{E}(z(t,x;\alpha; z_T))\right|^{2}dxdt +\left\langle y_{0},\mathbb{E}(z(0,\cdot;\alpha; z_T))\right\rangle,
\end{eqnarray*}
where $z(\cdot,\cdot;\alpha; z_T)$ denotes the solution of \eqref{sys:adjSchr} associated with the final data $z_{T}$. 
The minimizer $\tilde{z}_{T}$ of $\mathcal{J}$ is characterized by the Euler-Lagrange equation:
\begin{eqnarray}
  \int_{0}^{T}\int_{G_{0}}\mathbb{E}(z(t,x;\alpha; z_T))\overline{\mathbb{E}(z(t,x;\alpha; \tilde{z}_T))}dxdt +\left\langle y_{0},\mathbb{E}(z(0,\cdot;\alpha; z_T))\right\rangle=0, \label{EL}
\end{eqnarray}
for all $z_T\in \mathcal{H}$.
Consequently, based on the proof of Proposition \ref{Pr: obser_null cont}, we can choose the following quantity as a control:
\begin{eqnarray*}
    u(t,x)=\mathds{1}_{G_{0}}\mathbb{E}(z(t,x;\alpha; \tilde{z}_{T})).
\end{eqnarray*}
To express the Euler-Lagrange equation \eqref{EL} as a linear equation whose minimizer $\tilde{z}_{T}$ is its solution,  
let us now define the linear operator $\Lambda$, usually referred to as the Gramian operator, as follows
\begin{eqnarray*}
	\Lambda(z_T) \overset{\text{def}}{=} \mathbb{E}(y(T,\cdot;\alpha;0; \mathbb{E}(z(\cdot,\cdot;\alpha; z_T)))), 
\end{eqnarray*}
where $y(\cdot,\cdot;\alpha;0; \mathbb{E}(z(\cdot,\cdot;\alpha; z_T)))$ denotes the solution of \eqref{sys:Schr} associated with the initial data $y_{0}=0$ and $u=\mathbb{E}(z(\cdot,\cdot;\alpha; z_T))$.
The duality argument yields the following:
\begin{eqnarray}
	\int_{0}^{T}\int_{G_{0}}\mathbb{E}(z(t,x;\alpha; z_T))\overline{\mathbb{E}(z(t,x;\alpha; \tilde{z}_T))}dxdt=\langle \Lambda(\tilde{z}_{T}),z_{T}\rangle. \label{EL1}
\end{eqnarray}
Again applying the duality argument, we obtain:
\begin{eqnarray}
	\left\langle y_{0},\mathbb{E}(z(0,\cdot;\alpha; z_T)) \right\rangle=\left\langle \mathbb{E}(y(T,\cdot;\alpha; y_0; 0)), z_{T} \right\rangle, \label{EL2}
\end{eqnarray}
where $y(\cdot,\cdot;\alpha; y_0; 0)$ denotes the solution of \eqref{sys:Schr} associated with the initial data $y_0$ and $u=0$.
By injecting \eqref{EL1} and \eqref{EL2} into \eqref{EL},  we obtain the following linear equation:
\begin{eqnarray*}
	\Lambda(\tilde{z}_{T})=-\mathbb{E}(y(T,\cdot;\alpha; y_0; 0)).
\end{eqnarray*}
To resolve this operator equation, we propose the Conjugate Gradient method in Algorithm \ref{algo1}, which is an efficient algorithm for solving linear systems.
\begin{algorithm}
	\caption{Conjugate Gradient Method (CG)} \label{algo1}
	\begin{algorithmic}[1]
		\State \textbf{Input:} An initial state to be controlled \( y_0\in \mathcal{H} \), linear operator \( \Lambda \), initial guess \( z_0 \), tolerance \( \text{tol} \), maximum iterations \(k_{\text{max}}\) 
		\State \textbf{Initialize:} 
		\State $r_0=-\mathbb{E}(y(T,\cdot;\alpha; y_{0}; 0))-\Lambda(z_{0})$ (an initial residual)
		\State $p_0 = r_0$ (initial descent direction)\State $k=0$
		
		\While{$||r_k|| > \text{tol}$ and $k < k_{\text{max}}$}
		\State $a_k = \frac{\|r_k\|^2}{\langle \Lambda(p_k), p_k \rangle}$
		\State $z_{k+1} = z_k + a_k p_k$
		\State $r_{k+1} = r_k - a_k \Lambda(p_k)$
		
		\If{$||r_{k+1}|| \leq \text{tol}$}
		\State \textbf{break}
		\EndIf
  
		\State $b_k = \frac{\|r_{k+1}\|^2}{\|r_k\|^2}$
		\State $p_{k+1} = r_{k+1} + b_k p_k$
		\State $k = k + 1$
		\EndWhile
		
		\State \textbf{Output:} Approximate solution $z_k$ of $\tilde{z}_{T}$.
	\end{algorithmic}
\end{algorithm}
However, we immediately see from Algorithm \ref{algo1} that this requires finding the averaged states and adjoint equations at each iteration. To do this, we first simulate a sample $\{\alpha_{1},\cdots, \alpha_{M}\}$ of size $M$ drawn from the distribution of the random variable $\alpha$. Then,  to find the state averaged, we solve the equation \eqref{sys:Schr} for each $\alpha_{k}$, $k=1,\cdots,M$ and, using the classical Monte
Carlo estimator $\mathbb{E}_{M}$, we have 
\begin{eqnarray*}
	\mathbb{E}_{M}(y(t, \cdot;\alpha;y_{0};u))\overset{\text{def}}{=}\frac{1}{M}\sum_{k=1}^{M}y(t, \cdot;\alpha_k;y_{0};u) \approx \mathbb{E}(y(t,\cdot;\alpha;y_{0};u))\quad \text{as} \quad M\longrightarrow \infty.
\end{eqnarray*}
More specifically, for all $t\in [0,T]$ the statistical error in the $L^2$-setting is given by (see \cite{ali2017multilevel, lazar2022control}):
\begin{eqnarray*}
    \|\mathbb{E}_{M}(y(t, \cdot;\alpha;y_{0};u))-\mathbb{E}(y(t,\cdot;\alpha;y_{0};u))\|_{L^{2}(\Omega;L^{2}(G))} \leq \frac{\|y(t, \cdot;\alpha;y_{0};u)\|_{L^{2}(\Omega;L^{2}(G))}}{\sqrt{M}},\;\; \forall M\in \mathbb{N}\setminus \{0\}.
\end{eqnarray*}
\paragraph{}
To solve the equation \eqref{sys:Schr} for $\alpha_{k}$, we use an implicit finite-difference scheme. In this approach, we use the uniform spatial and temporal grid given by $x_j=j\Delta x, j=0,\cdots, Nx$, $t_n=n\Delta t, n=0,\cdots, Nt$ with $\Delta x=\frac{|G|}{Nx}$ and $\Delta t=\frac{T}{Nt}$. Next, we
denote by $y_j^n=y(t_n, x_j)$.
The time derivative is approximated using a backward difference:
\[ y_t \approx \frac{y_{j}^{n+1}-y_j^n}{\Delta t}\]
and the second derivative with respect to $x$ is approximated at time $n+1$ by:
\[ y_{xx} \approx \frac{y_{j+1}^{n+1}-2y_j^{n+1}-y_{j-1}^{n+1}}{(\Delta x)^2}.\]
This leads to a system of linear equations whose representative matrix is tridiagonal.
\subsection{Numerical experiments}
We will solve numerically the averaged null controllability problem of system \eqref{sys:Schr}
and check that the previous CG Algorithm converges satisfactorily in several particular cases.
\subsubsection{Test 1 (Normal distribution)}  
The CG Algorithm has been applied (see Figures \ref{fig: fig1}--\ref{fig: fig4}) with the following data:
\begin{itemize}
	\item $G=(0,1)$, $G_0=(0.25,0.75)$, $T=0.4$.
	\item $y_{0}(x)=\sin(\pi x)$.
	\item $\alpha$ is given by the normal distribution.
\end{itemize}
\par
For our computations, we take $Nx=40$ and $Nt=80$ for the spatial and temporal parameters of the mesh. The initial guess in the algorithm is taken as $z_0=\sin(\pi x)$. We also choose the stopping parameters $k_{\text{max}}=100$ and $\text{tol}=10^{-5}$ for the plots.
\begin{figure}[H]
    \centering
    \includegraphics[width=0.3\linewidth]{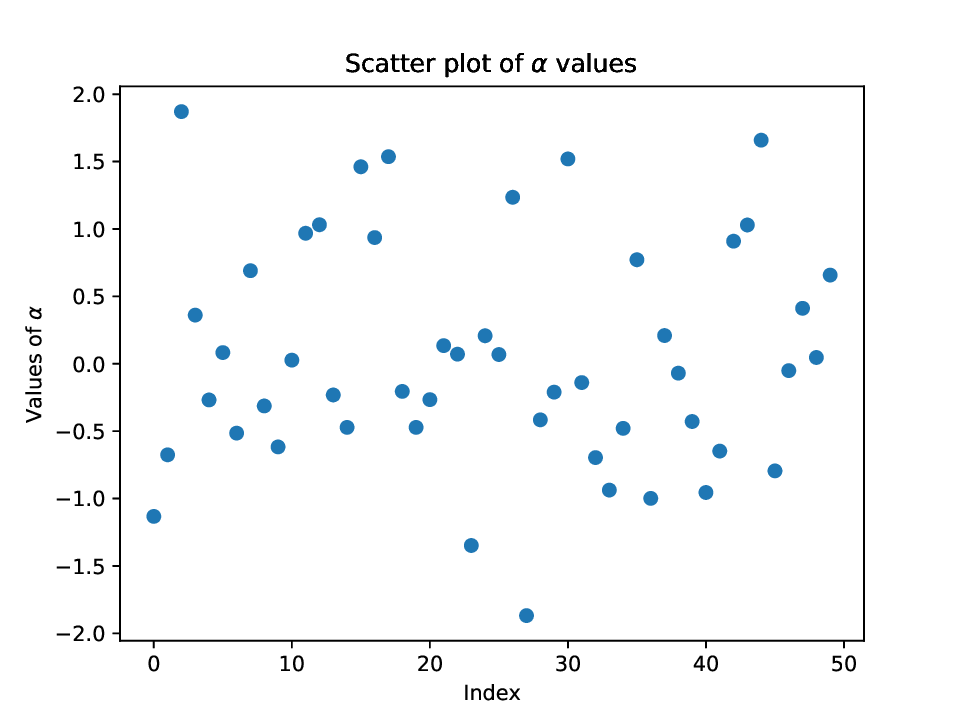}
    \caption{Sample of normal distribution of size $M=50$.}
    \label{fig: fig1}
\end{figure}

\begin{figure}[H]
    \centering
    \includegraphics[width=1.0\linewidth]{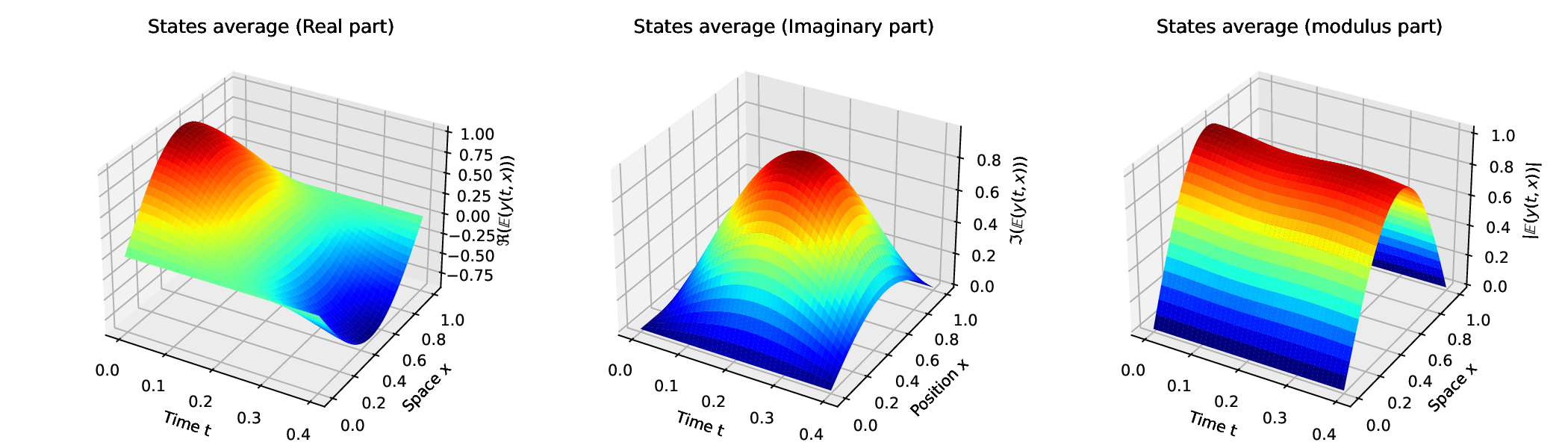}
    \caption{Average of uncontrolled states for normal distribution.}
    \label{fig: fig2}
\end{figure}

\begin{figure}[H]
	\centering
	\includegraphics[width=1.0\linewidth]{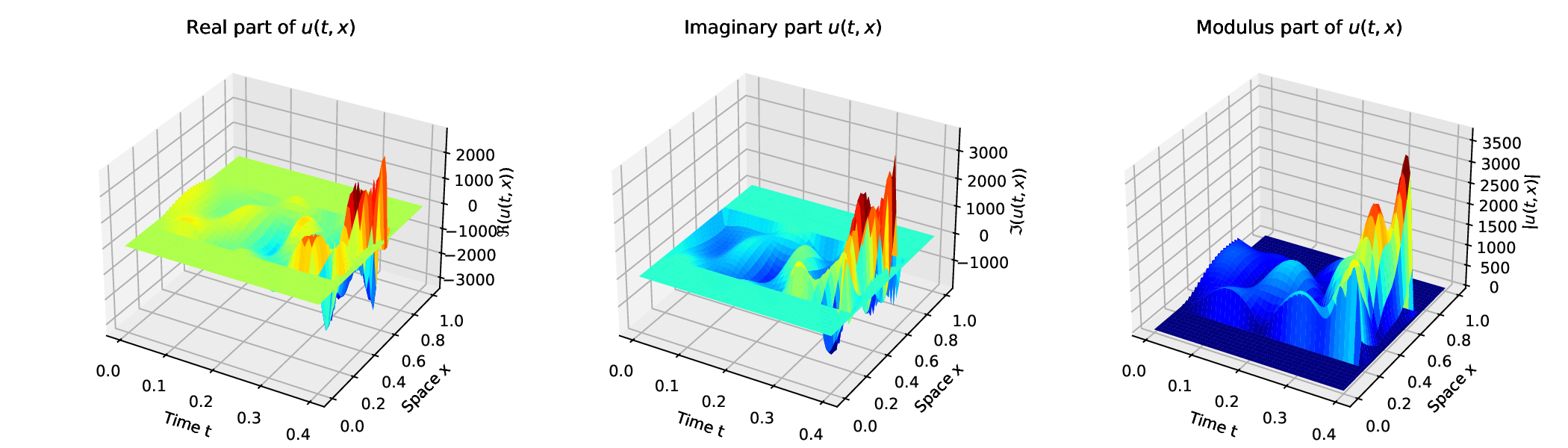}
	\caption{Computed control for normal distribution}
	\label{fig: fig2g}
\end{figure}

\begin{figure}[H]
    \centering
    \includegraphics[width=1.0\linewidth]{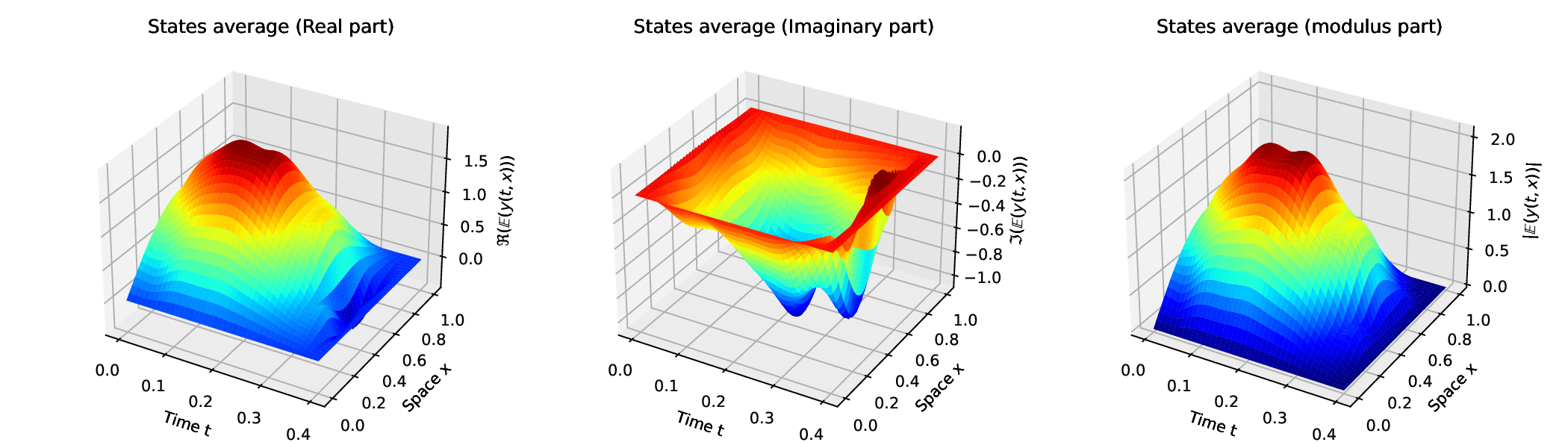}
    \caption{Average of controlled states for normal distribution.}
    \label{fig: fig3}
\end{figure}

\begin{figure}[H]
    \centering
    \includegraphics[width=1.0\linewidth]{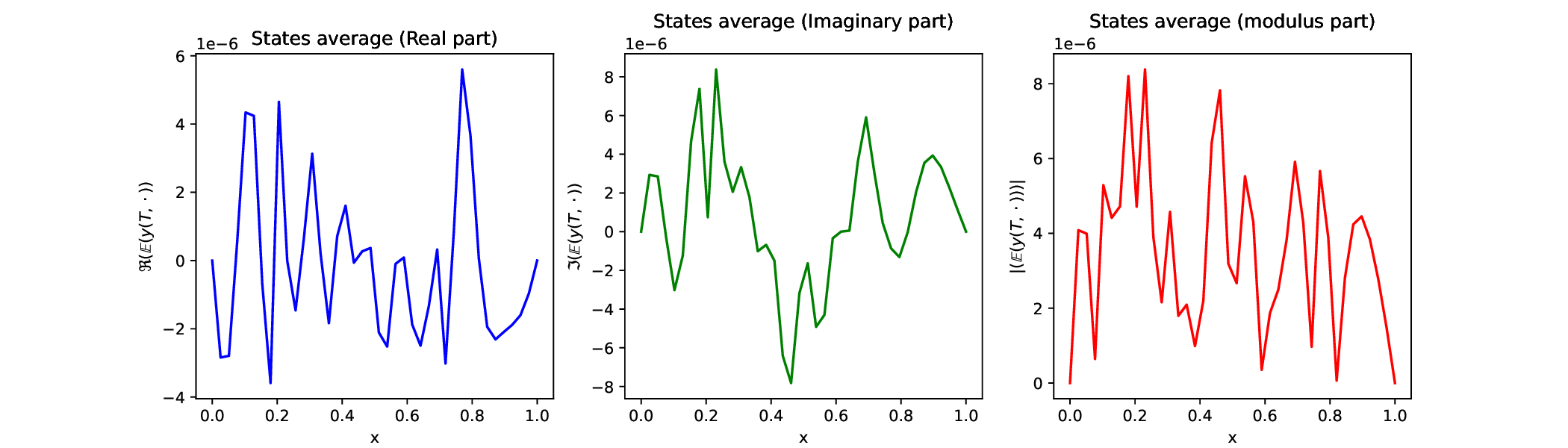}
    \caption{Average of controlled states at time $t=T$ for normal distribution.}
    \label{fig: fig4}
\end{figure}
\subsubsection{Test 2 (Cauchy distribution)} 
In a second experiment (see Figures \ref{fig: fig5}--\ref{fig: fig8}), we kept the data from Test 1, with the exception of the following. 
\begin{itemize}
	\item $\alpha$ is given by the standard Cauchy distribution and $T=0.2$.
\end{itemize}
\begin{figure}[H]
    \centering
    \includegraphics[width=0.3\linewidth]{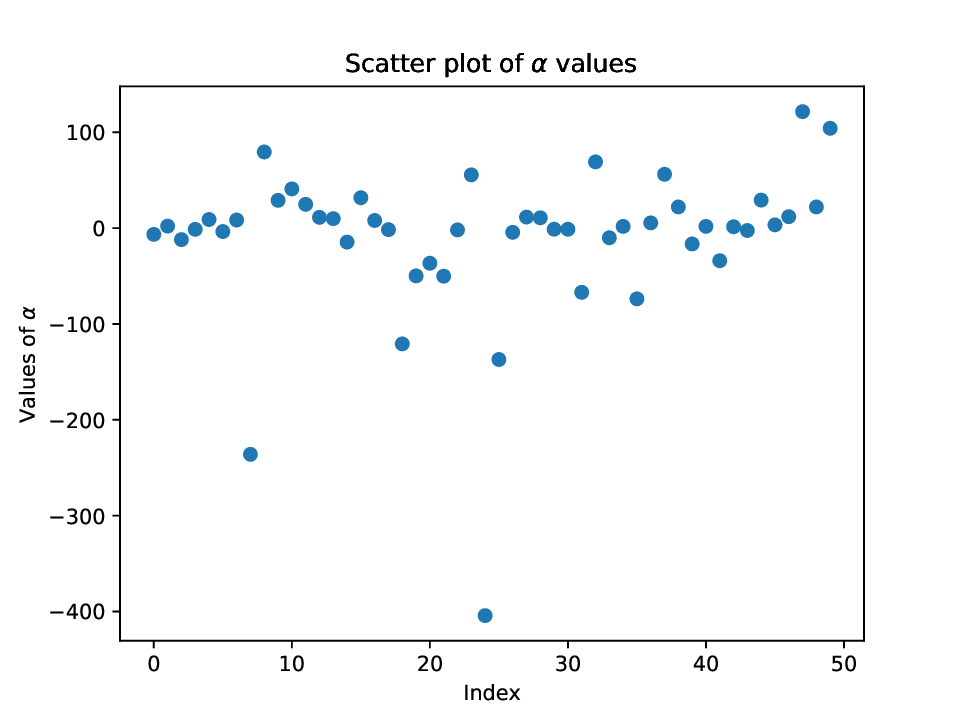}
    \caption{Sample of Cauchy distribution of size $M=50$.}
    \label{fig: fig5}
\end{figure}

\begin{figure}[H]
    \centering
    \includegraphics[width=1.0\linewidth]{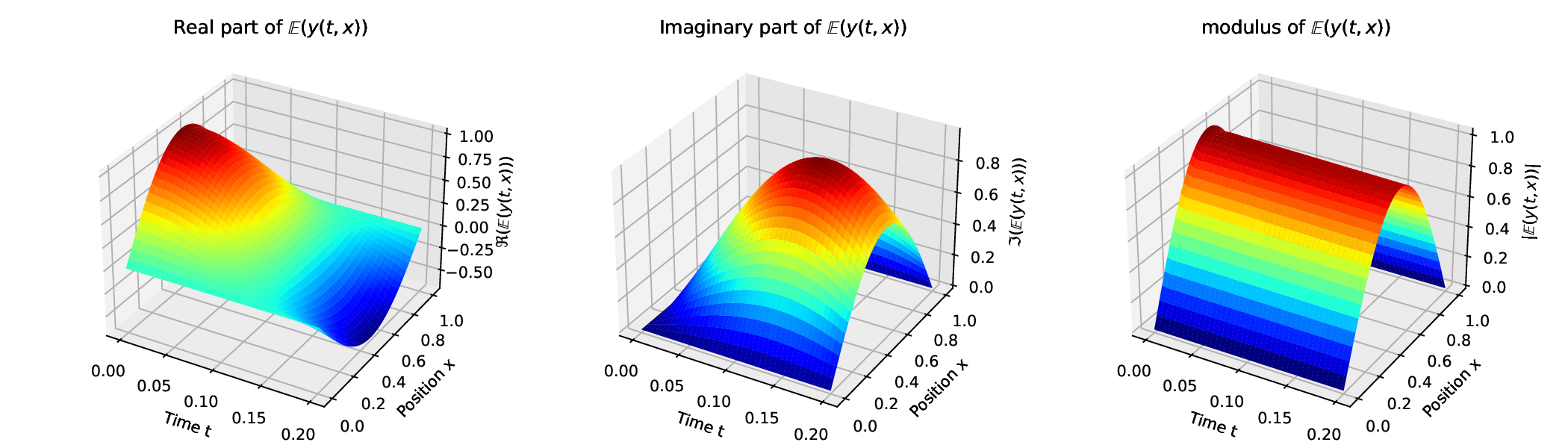}
    \caption{Average of uncontrolled states for Cauchy distribution.}
    \label{fig: fig6}
\end{figure}

\begin{figure}[H]
	\centering
	\includegraphics[width=1.0\linewidth]{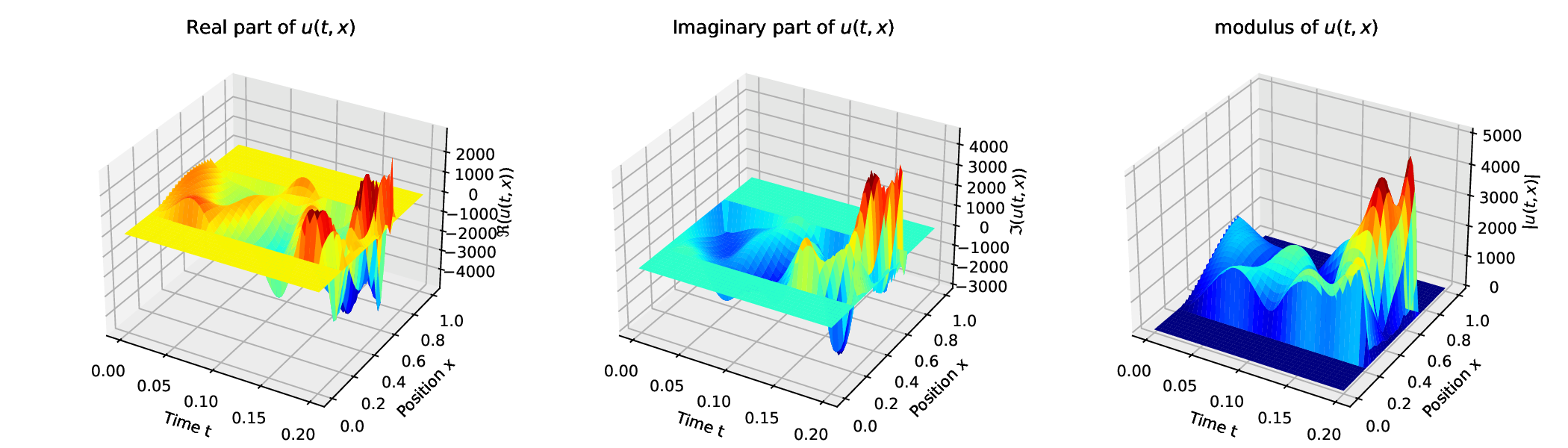}
	\caption{Computed control for Cauchy distribution.}
	\label{fig: fig7g}
\end{figure}

\begin{figure}[H]
    \centering
    \includegraphics[width=1.0\linewidth]{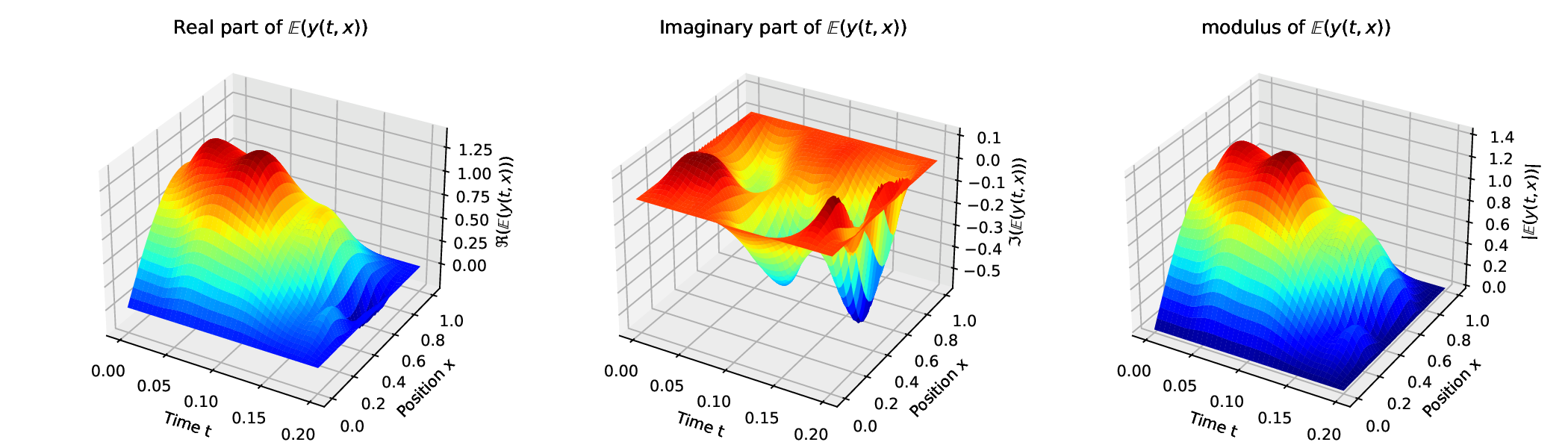}
    \caption{Average of controlled states for Cauchy distribution.}
    \label{fig: fig7}
\end{figure}

\begin{figure}[H]
    \centering
    \includegraphics[width=1.0\linewidth]{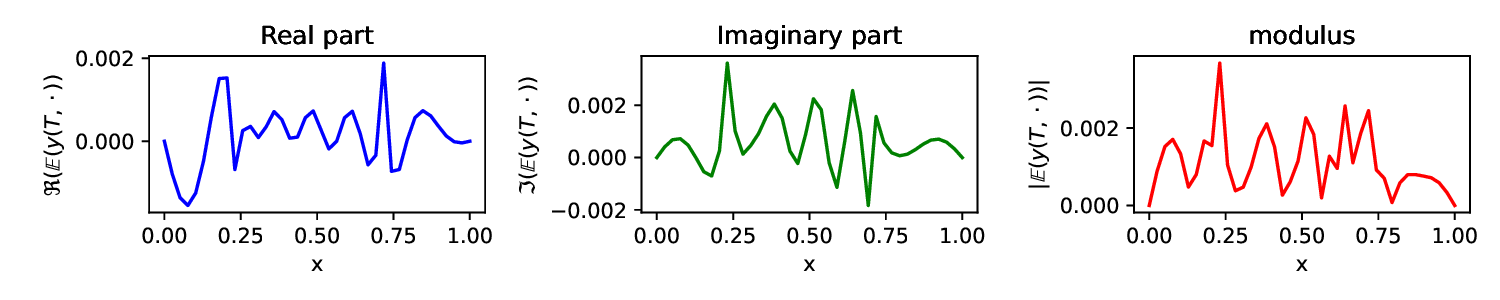}
    \caption{Average of controlled states at time $t=T$ for Cauchy distribution.}
    \label{fig: fig8}
\end{figure}

\par 
We can observe that the controllability error is on the order of $10^{-3}$ for the Cauchy distribution (see Figure \ref{fig: fig8}), while it is reduced to the order of $10^{-6}$ (see Figure \ref{fig: fig4}) for the normal distribution. This difference can be explained by the fact that the Cauchy distribution is highly spread out (see Figure \ref{fig: fig5}) and has heavy tails compared to the normal distribution. In fact, the normal distribution exhibits an exponential decay in the probability of extreme values, which significantly limits the impact of outliers on the error. In contrast, the Cauchy distribution follows a power-law decay, making extreme values more frequent and thereby amplifying the observed error.

\section{Additional control problems}\label{sec:prob}
\paragraph{}
In this Section, we would like to describe the analogue controllability 
problems, and point out some open problems:

\begin{itemize}
\item \textbf{Schrödinger equation with other probability distributions.} Determining the null controllability of the random Schrödinger equation with many probability distributions remain an open problem. This is the case for random variable of stable distributions with $r\leq \frac{1}{2}$. Notably, if $(r,\beta)=(1/2,1)$, we find the Lévy distribution. In spectroscopy, this distribution, with frequency as a dependent variable, is known as a van der Waals profile. For such a distribution the question of null controllability in average remains open and, even if it is a limiting case, is not cover by Hypothesis \HypH. 

\item \textbf{Controls acting on the boundary.} As for controls acting on a measurable boundary subset $\Gamma_0$, 
when $\Gamma_0$ is relatively open, 
similar techniques to the ones in this paper might be used,
but with Theorem 9 in \cite{apraiz2014observability} replacing Lemma \ref{lm:obsellG0}. However, the case in which $\Gamma_0$ is just a set with a strictly positive relative measure requires a much more careful analysis, since one does not have an analogue result to Theorem 5 in \cite{apraiz2014observability}. We note that the boundary-averaged control for random Schrödinger equations, where the diffusivity follows standard random variables and the control region is relatively open, is established in Theorems 5.3 and 5.4 of \cite{lu2016averaged}.

\item  \textbf{Random initial values.} When the random initial datum 
$y_0\in L^1(\Omega; \mathcal{H})$ is independent of the random variable 
$\alpha$, the formula \eqref{eq:Fourdec} remains valid (with 
$y_0$ replaced by 
$\mathbb{E}(y_0)$), since the expectation of the product of independent random variables equals the product of their expectations. As a result, the analogue of Theorem \ref{Main result} still holds. However, when $y_0$ and $\alpha$ are not independent, the characteristic function no longer appears in formula \eqref{eq:Fourdec}, and the problem then requires additional assumptions and a different type of analysis, which goes beyond the scope of this paper.

\item  \textbf{On exact controllability} Even if \eqref{sys:Schr} is not exactly controllable in average in the space $L^2(G)$ with controls acting in $L^2(0,T;L^2(G_0))$, it may be controllable if we consider a larger control space or a smaller target space. For example, as shown in \cite[Theorem 5.3]{lu2016averaged}, when  $\alpha$ is a uniformly distributed random variable, \eqref{sys:Schr} is exactly controllable in average in the space $L^2(G)$ with controls acting in $L^2(0,T;H^{-2}(G_0))$. Also,  as shown in \cite[Theorem 5.3]{lu2016averaged}, when  $\alpha$ is an exponentially distributed random variable, we may reach any final value in $H^2(G)\cap H_0^1(G)$ with initial values in $L^2(G)$ with controls acting in  $L^2(0,T;L^2(G_0))$. 
Finding a characterization on the space in which the equation is exactly controllable in average or obtaining sufficient conditions for the random diffusion remain open problems. 
\item  \textbf{Extension to more general linear Schrödinger operators.} The techniques presented in this paper also apply to any Schrödinger operator of the form
$\partial_t - \alpha \mathrm{i} \mathrm{A}$,
where $\mathrm{A}$ is a self-adjoint elliptic operator with compact resolvent that satisfies the elliptic observability \eqref{est:seqobs}. For instance, the same results hold for the degenerate operator in 
$L^2(0,1)$ given by
\begin{align*}
	\mathrm{A} = \frac{d}{dx}\left(x^a \frac{d}{dx}\right), \quad \text{with } a \in [0,2),
\end{align*}
on a suitable domain depending on whether the degeneracy is weak or strong; see \cite{buffe2024optimal}.
\item \textbf{Non-linear and time dependent Schrödinger equation.} When considering random diffusion, 
dealing with non-linear equations or with time dependent lower order terms
are open problems even for the heat equation. In that setting, it is important to develop new methods that go beyond the use of spectral techniques. 
\item \textbf{Numerics.} A theoretical analysis for designing efficient numerical methods to compute the control of the random Schrödinger equation remains an open problem. Unlike the deterministic case, the Gramian operator is not necessarily positive definite (see Eq. (5.2) and Remark 2.7), which prevents guaranteeing comparable convergence rates. This issue might be addressed by employing regularization techniques or by adapting robust methods developed for finite-dimensional systems in \cite{lazar2022control}.

\end{itemize}

\section{Conclusion}\label{Sec6}
We have investigated the averaged null controllability of random Schrödinger equations, where the diffusivity is a random variable with a characteristic function that decays exponentially. Our results show that such systems are null controllable in average from any measurable subset of the domain with strictly positive measure and within an arbitrarily time, while a simultaneous control is not possible for an absolutely continuous diffusivity. Future research will explore the averaged controllability of the Schrödinger equation with discrete random (which is not finite) diffusivity, as well as extensions to other types of PDEs.

\section*{Acknowledgments}
We thank the anonymous referees for their valuable comments and suggestions.
\footnotesize
\bibliographystyle{abbrv} 
\bibliography{Schravg}

\end{document}